\documentclass[reqno]{amsart}

\usepackage{hyperref}
\usepackage{amsmath}
\usepackage{amssymb}
\usepackage{amsfonts}
\usepackage{amsthm}
\usepackage{latexsym}
\usepackage{esint}
\usepackage{mathtools}
\usepackage{mathrsfs}
\usepackage{graphicx}
\usepackage{subcaption}
\usepackage{wrapfig}
\usepackage{bbm}
\usepackage{color}
\usepackage{bm}
\usepackage{float}

\usepackage{tikz}
\usepackage{tikz-3dplot}

\usepackage{tikz-cd}
\usetikzlibrary{lindenmayersystems}
\usetikzlibrary{decorations.pathreplacing}

\usepackage[top=1in, bottom=1.25in, left=1.10in, right=1.10in]{geometry}
\usepackage{todonotes}
\usepackage{stackengine}
\allowdisplaybreaks

\newtheorem{theorem}{Theorem}[section]
\newtheorem{lemma}[theorem]{Lemma}

\newtheorem{corollary}[theorem]{Corollary}

\theoremstyle{definition}
\newtheorem{definition}[theorem]{Definition}

\theoremstyle{remark}
\newtheorem{remark}[theorem]{Remark}

\numberwithin{equation}{section}

\newcommand{\bx}{\mathbf{x}}
\newcommand{\by}{\mathbf{y}}

\newcommand{\bk}{\mathbf{k}}

\newcommand{\bn}{\mathbf{n}}
\newcommand{\be}{\mathbf{e}}

\newcommand{\dy}{\, \mathrm{d}\mathbf{y}}
\newcommand{\dd}{\,\mathrm{d}}

\newcommand{\dt}{\, \mathrm{d}t}
\newcommand{\ds}{\, \mathrm{d}s}

\newcommand{\naby}{\nabla_{\mathbf{y}}}

\newcommand{\Dely}{\Delta_{\mathbf{y
}}}

\newcommand{\R}{\mathbb{R}}

\begin{document}

\title{Stochastically-constrained Koiter shell models.}



\author{Prince Romeo Mensah}
\address{Faculty of Mathematics, University of Duisburg-Essen, Thea-Leymann-Strasse 9, 45127 Essen,
Germany}
%
\author{Pierre Marie Ngougoue Ngougoue}
\address{Faculty of Mathematics, University of Duisburg-Essen, Thea-Leymann-Strasse 9, 45127 Essen,
Germany}
\subjclass[2010]{35R60; 60H15; 74B20; 74E35}

\date{\today}

\begin{abstract}
We derive stochastically-constrained Koiter shell models in line with the SALT (Stochastic Advection by Lie Transport)  approach introduced by Holm [Proc. A. 471 (2015)].  First, we deduce the stochastic partial differential equations for the generalised nonlinear elastic and linear elastic Koiter shell models with  abstract functional derivatives of their corresponding membrane and flexural energies. We then present a prototype for a stochastically-constrained (simplified)  linearised Koiter shell model
that captures stiffness effects arising from shell curvature, bending and membrane stresses, interior and surface forces, and, more generally, stochastic buckling.
Finally, we show that if a weak pathwise solution of this prototype is parametrised by a suitably chosen family of noise coefficients, we obtain in the parameter limit, the deterministic viscoelastic shell model with viscous damping. 

\end{abstract}


\keywords{Koiter shell, transport noise, Stochastic Advection by Lie Transport, Stochastic variational principle}

\maketitle
 
\section{Introduction}  

The (stationary) Koiter shell model is a two-dimensional model obtained as a three-dimensional model reduction of elasticity under two main assumptions. Firstly, the shell is assumed to be very thin so that all stresses are planar and parallel to the mid-surface. The second is the Kirchhoff-Love assumption that requires normals to remain straight, normal, and unstretched. Since Koiter initial work, several attempts were made to give a  rigorous justification for the model reduction until this was finally achieved using Gamma-convergence by  Le Dret \& Raoult \cite{dret1996membrane} and Friesecke, James, Mora \& M\"uller \cite{friesecke2003derivation}. We refer to Ciarlet's book \cite{ciarlet2005introduction} for a thorough overview on elasticity and Koiter shell models.

Building on the deterministic reduced model,   our first goal, performed in Sections \ref{sec:modelling}  and  \ref{sec:proto},  is to rigorously incorporate stochasticity into the Koiter shell models. Incorporating stochasticity into shell models is particularly useful for studying stochastic buckling \cite{papadopoulos2009buckling, reuter2025probabilistic, schafer2006stochastic}. In (deterministic) structural mechanics, buckling is analysed under the assumption that certain crucial information like critical load is known. In practise, however, there are several structural imperfections in shells such as random loads, geometric imperfections, manufacturing tolerance and material variability (e.g. shell thickness), see Figure \ref{fig.tubular}. Since shells are extremely sensitive to tiny defects and imperfections, adding stochasticity allows engineers to analyse possible structural failures  under random fluctuations. That being said, how stochasticity is added is also important. Rather than merely appending noise to the equivalent deterministic equation, adding stochasticity to the variational principle ensures a physical consistency between energy fluctuations and constraints of the shell's geometry. 
\begin{figure}
\begin{center}
\begin{tikzpicture}[scale=2]
    \begin{scope}     
\draw [dotted] (0.5,0.4) ellipse [x radius=0.1cm, y radius=0.23cm];
 
\draw [dotted] (1.15,0.4) ellipse [x radius=0.1cm, y radius=0.23cm];

    \node at (1.15,0.4) {$\bullet$};
    \node at (1.15,0.3) {$\bx$}; 
 
      \draw [thick] (0.5,0.63) .. controls (0.9,0.83) and (1.2,0.53) .. (2,0.43);  
     
          \draw [thick] (0.5,0.17) .. controls (0.9,0.37) and (1.2,0.07) .. (2,-0.03); 

\draw[dotted] (2,0.2)ellipse [x radius=0.1cm, y radius=0.23cm];
 
  \end{scope}

*****MIDDLE IMAGE*************

\begin{scope}
      \draw [thick] (3,0.7) to (3,0.07);  
      \draw [dotted] (3.15,0.7) ellipse [x radius=0.14cm, y radius=0.05cm];
            \draw [dotted] (3.15,0.07) ellipse [x radius=0.14cm, y radius=0.05cm];
       \draw [thin] (3.3,0.7) to (3.3,0.07);  
\end{scope}

*****RIGHT IMAGE*************

*****Thick front curved lines*************
\draw[thick] (4,0.07) to[bend left] (5,0.069); 

*****back curved lines*************
\draw[thick] (4.2,0.27) to[bend left] (5.2,0.269); 

*****left sided lines*************
\draw[thick] (4,0.07) to (4.2,0.27); 

*****right sided lines*************
\draw[thick] (5.2,0.269) to (5,0.069); 

*****labels and arrows************* 
    \draw [thin, ->] (4.5,0.7) -- (4.5,0.5);
    \draw [thin, ->] (4.4,0.65) -- (4.4,0.45);
    \draw [thin, ->] (4.3,0.60) -- (4.3,0.40);
    
    \draw [thin, ->] (4.8,0.7) -- (4.8,0.5);
    \draw [thin, ->] (4.7,0.65) -- (4.7,0.45);
    \draw [thin, ->] (4.6,0.60) -- (4.6,0.40);
    
    \draw [thin, ->] (5.1,0.7) -- (5.1,0.5);
    \draw [thin, ->] (5,0.65) -- (5,0.45);
    \draw [thin, ->] (4.9,0.60) -- (4.9,0.40); 
\end{tikzpicture}
      \vspace{-10pt}
  \caption{Left: A thin elastic cylinder (e.g. blood vessel) with an imperfection characterised by a slight deviation in the diameter at point $\bx$.
Middle: A thin cylinder with varying shell thickness from right to left.
   Right: A  thin shell subject to load.}
    \vspace{-5pt}
      \label{fig.tubular}
\end{center}
\end{figure}
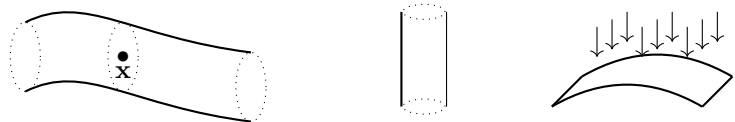
%
The SALT (Stochastic Advection by Lie Transport) approach by Holm \cite{holm2015variational} achieves such physical consistency and has since been successfully implemented to achieve the stochastic  Euler and Navier--Stokes equations \cite{cotter2019numerically, drivas2020circulation} with transport noise. It's versatility is further illustrated by its extension to soliton dynamics \cite{holm2016variational}, to geophysical fluids \cite{cotter2020data, crisan2023theoretical} and to wave-current interactions \cite{holm2021stochastic, holm2021stochastic1}, amongst others.   In light of this, we extend its application to Koiter shells with the caveat of implementing SALT without explicitly resorting to Lie Algebra. As it turns out, the equivalent use of purely variational calculus suffices,
 at least for our need. This is the object of the next section.

  Recent works \cite{flandoli2021scaling, flandoli2020convergence, flandoli2021high,  galeati2020convergence} have demonstrated the regularising effects of transport noise in fluids. 
Our second goal  will be to demonstrate that a similar result can be applied to the linearised elastic material under study.  To better explain our approach, however, we first give a summary of \cite{flandoli2021scaling} which  will be our main point of reference. Of particular interest, this reference deals with the construction of weak solutions for the deterministic Navier--Stokes equations from the Stochastic Euler equations with transport noise.

To achieve their desired result, the authors  first construct a weak martingale solution for the Stochastic Euler equation with transport noise (which, for brevity, we henceforth refer to simply as the ``SPDE" throughout this review) by passing through the finite-dimensional   SDE \cite[(3.2)]{flandoli2021scaling}, equipped  with a suitably designed family of noise coefficients. We remark, however, that this SDE is not necessarily a projection of the expected Stochastic Euler equations with transport noise, since there are many finite-dimensional projections within the convective terms. In the present work, we do not pursue this step, since we can already infer the existence of our desired continuum solution for the linear shell equations from the variational analysis developed in the next two sections.

The second step in \cite{flandoli2021scaling} consists in considering a family of solutions to the SPDE and proving their convergence  to the deterministic Navier--Stokes equations. Since this family is infinite-dimensional, they authors  must again rely on the stochastic compactness machinery employed  in the first step (see, in particular,  the discussion at the  bottom of \cite[page 582]{flandoli2021scaling}).

The approach pursued here is substantially different and,  owing to  the linear structure of the shell, considerably simpler.
Indeed, we show that for a suitably chosen finite-dimensional family of transport noise and corresponding  finite-dimensional projections of the dataset, there exists a solution to an SDE that preserves the same transport structure as the continuum problem.
Although this SDE is not necessarily obtained as a direct projection of the linear shell equation, its solution evolves within the same finite-dimensional space onto which the data are projected, and both the discrete and continuum systems retain identical  transport structure. Moreover,  as noted earlier, the data of the SDE are precisely the finite-dimensional projections  of the data for the continuum equation. Our interest is therefore not to pass first from the SDE to an  SPDE and subsequently  to the PDE, but rather to pass directly from the finite-dimesnional SDE to the deterministic viscoelastic shell equation. With this approach, one benefits from standard theorems for (linear) SDE and thereby avoid the  stochastic compactness machinery.

\section{Modelling}
\label{sec:modelling} 
 
We let $\Omega\subset \mathbb{R}^3$ be an open, bounded, nonempty and connected reference domain  with an elastic shell $\partial\Omega:=\Gamma \times (-\epsilon_0,\epsilon_0) \subset \mathbb{R}^3$  of arbitrarily small thickness $2 \epsilon_0>0$ and a middle surface  $\Gamma \subset \mathbb{R}^{2}$. For simplicity, we identify $\Gamma$ with the flat torus endowed with periodic boundary condition. The same result, however, hold for any general mid-surface that is clamped at its boundary. 
 Now, we suppose that the shell is parametrised by a $C^3(\Gamma;\mathbb{R}^3)$-injective mapping
\begin{align*}
\bm{\varphi}:\Gamma \rightarrow\mathbb{R}^{3},
\qquad
\bm{\varphi}(\by):=(\varphi_1(\by),\varphi_2(\by),\varphi_3(\by))^\intercal
\end{align*}
such that $\partial_{y_1} \bm{\varphi}$ and $\partial_{y_2} \bm{\varphi}$ are linearly independent. Then, this pair form a covariant basis of the tangent plane to the middle surface $\bm{\varphi}(\Gamma )$  at the point $\bm{\varphi}(\by)$. Also, the function
\begin{align}
\label{nby}
\bn:\Gamma \rightarrow\mathbb{R}^{3},
\qquad
\mathbf{n}(\by)=\frac{\partial_{y_1} \bm{\varphi}  \times \partial_{y_2} \bm{\varphi} }{\vert \partial_{y_1} \bm{\varphi}  \times \partial_{y_2} \bm{\varphi}  \vert},
\end{align}
is a well-defined unit vector  normal to the surface $\bm{\varphi}(\Gamma )$ at $\bm{\varphi}(\by)$.    
All together, the three vectors 
\begin{align*}
\mathcal{B}:=
\{\partial_{y_1} \bm{\varphi} , \partial_{y_2} \bm{\varphi} ,\bn \}
\end{align*}
make up the covariant basis at the point $\bm{\varphi}(\by)$. 
Furthermore, from $\mathcal{B}$, we obtain a contravariant basis
\begin{align*}
\mathcal{B}^\star:=
\{\partial_{y_1}^\star \bm{\varphi} , \partial_{y_2}^\star \bm{\varphi} , \bn^\star  \}
\end{align*} 
at $\bm{\varphi}(\by)$ by defining elements $\be^*\in \mathcal{B}^\star$ to be vectors such that $\be^*\cdot\be=\delta_{ij}$ for each $\be\in \mathcal{B}$. In particular, we have
\begin{align*} 
\partial_{y_1}^\star \bm{\varphi} =\frac{\partial_{y_2} \bm{\varphi}   \times \bn }{\vert \partial_{y_1} \bm{\varphi} \times \partial_{y_2} \bm{\varphi} \vert}, 
\qquad 
\partial_{y_2}^\star \bm{\varphi} =\frac{-(\partial_{y_1} \bm{\varphi}  \times \bn )}{\vert \partial_{y_1} \bm{\varphi}  \times \partial_{y_2} \bm{\varphi} \vert},\qquad
\bn^\star =\bn,
\end{align*}
and  the vectors $\partial_{y_1}^\star \bm{\varphi} $ and $\partial_{y_2}^\star \bm{\varphi} $ are also in the tangent plane to the middle surface $\bm{\varphi}(\Gamma )$  at  $\bm{\varphi}(\by)$.
\mbox{Additionally,}
if $\dy$ is the area element along $\Gamma $, then
\begin{align}
\label{areaElem}
\mathrm{d}\by_{\mathbf{n}}:=\vert \partial_{y_1} \bm{\varphi}  \times \partial_{y_2} \bm{\varphi}  \vert \mathrm{d}\by 
\end{align}
is the area element along  $\bm{\varphi}(\Gamma )$. Here, note that  
\begin{align*} 
\vert \partial_{y_1} \bm{\varphi} \times \partial_{y_2} \bm{\varphi}  \vert\neq0
\end{align*} 
since the elements in $\mathcal{B}$ are linearly independent.

To give a well-defined notion of distance on the surface $\bm{\varphi}(\Gamma )$, we introduce its  metric tensor $\mathbb{A}$  whose covariant and contravariant components are
\begin{align*}
A^{ij}:=  \partial_{y_i} \bm{\varphi} \cdot \partial_{y_j} \bm{\varphi} 
\quad \text{ and }\quad {A^{ij}}^\star:= \partial_{y_i}^\star \bm{\varphi} \cdot \partial_{y_j}^\star \bm{\varphi}, \qquad i,j=1,2 ,
\end{align*}
respectively. The metric tensor $\mathbb{A}$ is symmetric and positive definite as the scalar product is commutative, and the elements in $\mathcal{B}$ (and in $\mathcal{B}^\star$) are linearly independent, respectively. Next, for a well-defined notion of curvature on the surface $\bm{\varphi}(\Gamma )$, we introduce its curvature tensor $\mathbb{B}$  whose covariant  components are
\begin{align}
\label{secFunForm}
B^{ij}:=\bn \cdot\partial_{y_iy_j}^2\bm{\varphi}, \qquad i,j=1,2.
\end{align}
The curvature tensor is also symmetric since  second derivatives of a smooth function commute; however, unlike the metric tensor $\mathbb{A}$,  it is not necessarily  positive definite.

At this point, we have all the required information to describe our reference thin elastic shell. We now proceed to model its time evolution by introducing dynamics. For this purpose, 
we assume that for each time $t\in\overline{I}:=\overline{(0,T)}$ where $T>0$ is fixed, the shell  deforms along the normal direction according to a displacement field $\eta \mathbf{n} : I \times \Gamma  \rightarrow\R^3$,   which is subject to random fluctuations (noise).
While deterministic elastic materials do not exhibit advection (or transport) since they do not ``flow" like fluids, 
advection can nevertheless arise in the stochastic setting as a way to model spatially correlated noise. In particular, imperfections such as material inhomogeneities, or fluctuating loads are typically not independent pointwise, but exhibit correlations along the surface. These correlations can be modelled by prescribing vectors fields along which the stochastic perturbations act.
 In this sense, the advection operator should not be interpreted as  physical transport of material, but rather as a mechanism to introduce geometric-consistent noise. 
Consequently, we model the evolution of  the displacement by a Stratonovich stochastic differential equation given by
\begin{align}\label{shellVelocity}
\dd\eta(t,\by) =\dot{\eta}(t,\by) \dd t+\frac{1}{2}\sum_{i=1}^N\bm{\sigma}_i(\by)\cdot\naby \eta (t,\by)\circ\dd W_t^i,
\end{align}
where $N\in\mathbb{N}$,  $(\bm{\sigma}_i)_{i=1}^N$ is a prescribed family of time-independent vector fields   (deterministic or random, and possibly  obtained from data), and $(W_t^i)_{i=1}^N$ is a family of  independent, identically distributed Brownian motions.
Moreover,
\begin{align*}
\dot{\eta}:=\partial_t\eta
\end{align*}
denotes the deterministic velocity (drift) of the shell.  Here, we remark that  the factor $\tfrac{1}{2}$ in the noise is just a convenient choice that can be omitted without any loss of generality. Note that  the actual  displacement is $\eta\bn$, not $\eta$; therefore, the corresponding  shell's velocity is $\partial_t\eta\bn$. For convenience, however, we shall always omit the unit normal when referring  to physical quantities such as the displacement, velocity and the soon-to-be introduced momemtum.  Furthermore,  observe that in the absence of noise, that is,  $\bm{\sigma}_i\equiv \bm{0}$,  \eqref{shellVelocity} reduces to the physical deterministic phenomenon, in which the total and partial time derivatives of the shell's displacement coincide due to the lack of advection.  
Finally, as is standard in stochastic analysis,  the unknown $\eta$ of the stochastic differential equation \eqref{shellVelocity}
is understood as  a stochastic process $(\omega,t,\by)\mapsto \eta(\omega,t,\by).$ The dependence on the outcome $\omega\in \Sigma$ in a probability space $(\Sigma,\mathcal{F},\mathbb{P})$ is omitted, and the unknown is  defined in the following integral sense
\begin{align*} 
\eta(t,\by) =\eta(0,\by) +\int_0^t\dot{\eta}(s,\by) \dd s+\frac{1}{2}\sum_{i=1}^N\int_0^t\bm{\sigma}_i(\by)\cdot\naby \eta (s,\by)\circ\dd W_s^i 
\end{align*}
\begin{figure}[H] 
    \centering
    \begin{subfigure}[b]{0.25\textwidth}
        \includegraphics[width=\textwidth]{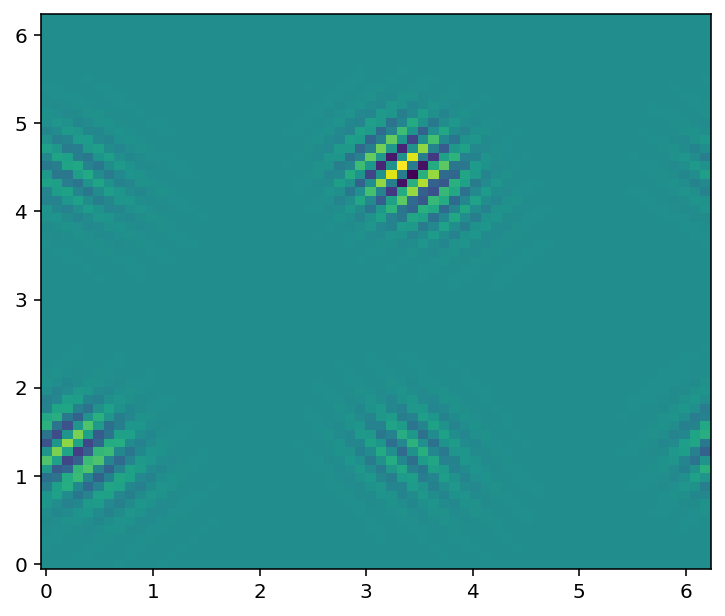}
    \end{subfigure}
    ~ 
    \begin{subfigure}[b]{0.25\textwidth}
        \includegraphics[width=\textwidth]{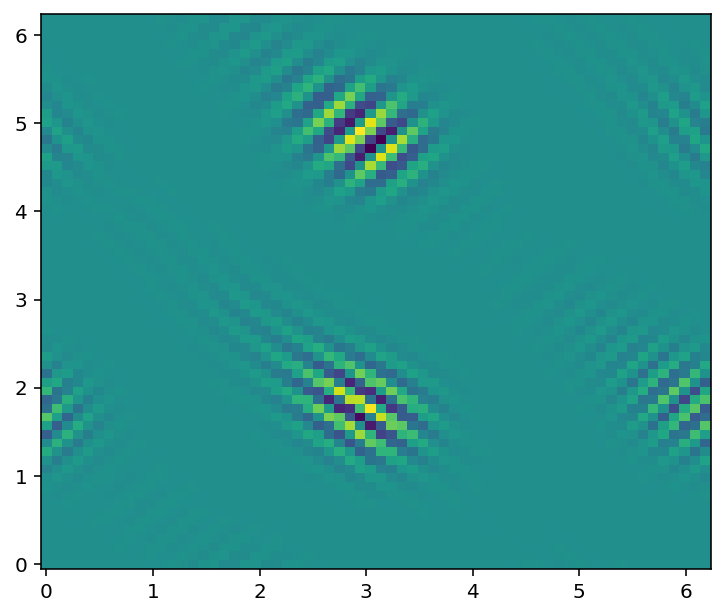}
    \end{subfigure}
    ~ 
    \begin{subfigure}[b]{0.25\textwidth}
        \includegraphics[width=\textwidth]{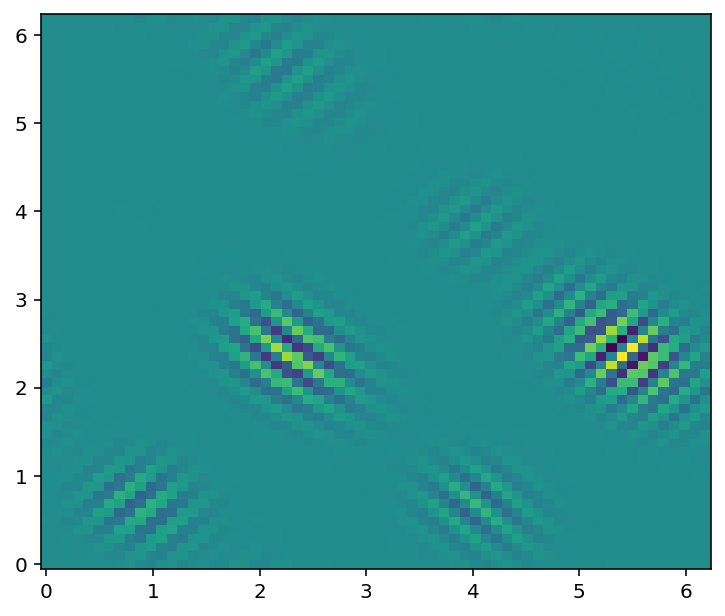}
    \end{subfigure}
    \caption{Three snapshots for $\eta(T,\by)$ solving \eqref{shellVelocity} for $N = 2$. The stochastic term is defined by two vector fields  $\bm{\sigma}_1=2(\sin(y_1),-\cos(y_2))^\top$, $\bm{\sigma}_2=2(-\cos(y_1),\sin(y_2))^\top $ on the torus $\Gamma=[-2\pi,2\pi]^2$ with initial condition $\eta(0,\by)=\exp(-((y_1-\pi)^2+(y_2-\pi)^2))$.}\label{fig:3pix}
\end{figure}
for all $t \in  \overline{I}$ (see e.g., Figure \ref{fig:3pix}).
We may now parametrise the deformed boundary according to the mapping
\begin{align}
\label{deformEta}
\bm{\varphi}_\eta:\overline{I}\times \Gamma \rightarrow\mathbb{R}^{3},
\qquad
\bm{\varphi}_{\eta(t)}(\by)=\bm{\varphi}(\by) + \mathbf{n}(\by)\eta(t, \by), 
\end{align}
resulting in the deformed middle surface  
$\bm{\varphi}_{\eta(t)}(\Gamma )$. More generally, 
\begin{align*}
\partial{\Omega_{\eta(t)}}=\big\{\bm{\varphi}_{\eta(t)}(\by):=\bm{\varphi}(\by)+\mathbf{n}(\by)\eta(t,\by)\, :\, t\in I, \by\in \Gamma \big\}
\end{align*}
represents the boundary of the flexible domain at any instant of time $t\in I$. 

Under this deformation,  the metric tensor $\mathbb{A}_\eta $ of the deformed middle surface  $ \bm{\varphi}_{\eta(t)}( \Gamma )$ has covariant components given by
\begin{align*} 
A_\eta^{ij}=&
\partial_{y_i} \bm{\varphi}_{\eta} \cdot \partial_{y_j} \bm{\varphi}_{\eta}, \qquad i,j=1,2.
\end{align*} 
By subtracting the covariant components of the metric tensor  of the original middle surface from those of the deformed surface and taking half of the difference, we obtain
\begin{align*}
G_\eta^{ij} :=& \frac{1}{2}(A^{ij}_\eta-A^{ij}), \qquad i,j=1,2
\end{align*}
which are the covariant components of the  change of metric tensor $\mathbb{G}_\eta$, measuring the variation of the metric from the   surface $\bm{\varphi}(\Gamma )$ to the deformed  surface  $ \bm{\varphi}_{\eta(t)}( \Gamma )$.

With this information, we now wish to find elements of the corresponding curvature tensor which encode information about the second derivatives of $ \bm{\varphi}_{\eta(t)}( \Gamma )$.
To  this end, we first observe  that,  since the vectors in $\mathcal{B}$ are linearly independent on $\Gamma $ and $\eta$ is sufficiently smooth, the vectors
\begin{align*}
\mathcal{B}_\eta=\{\partial_{y_1} \bm{\varphi}_{\eta}, \partial_{y_2} \bm{\varphi}_{\eta}, \mathbf{n}_{\eta}
\},
\end{align*}
where
\begin{align*}
\mathbf{n}_\eta:\overline{I}\times \Gamma \rightarrow\mathbb{R}^{3},
\qquad
\mathbf{n}_{\eta(t)}(\by)
=& \partial_{y_1} \bm{\varphi}_{\eta}  \times \partial_{y_2} \bm{\varphi}_{\eta},
\end{align*}
remain linearly independent on $\Gamma $,  provided $\eta$ is sufficiently small, for instance in the 
$C^1(\Gamma  )$-norm or, more generally,   in the $W^{1,\infty}(\Gamma  )$-norm; that is $\Vert\naby \eta\Vert_{L^\infty(\Gamma)}\leq L$ for some $L>0$.

The curvature tensor $\mathbb{B}_\eta $ of  the deformed middle surface  $ \bm{\varphi}_{\eta(t)}( \Gamma )$ then has entries given by 
\begin{align*}
B_\eta^{ij} 
=& \mathbf{n}_{\eta}  \cdot
\partial_{y_iy_j}^2 \bm{\varphi}_{\eta}, \qquad i,j=1,2. 
\end{align*}
Accordingly, given the curvature tensor \eqref{secFunForm} of the original middle surface $\bm{\varphi}(\Gamma )$, we obtain
\begin{align*}
{R_\eta^{ij}}^\sharp  :=&\frac{B_\eta^{ij}}{\vert \partial_{y_1} \bm{\varphi} \times \partial_{y_2} \bm{\varphi}  \vert}-B^{ij}, \qquad i,j=1,2,
\end{align*}
which are the  covariant components of the  \textit{modified} change of curvature tensor $\mathbb{R}_\eta^\sharp$,  representing the change in curvature induced by the deformation $\eta.$
%
%
%
The use of ${R_\eta^{ij}}^\sharp$, rather than the expected covariant components $R_\eta^{ij}$ of the \textit{exact} change of curvature tensor
\begin{align*}
R_\eta^{ij}  :=&\widehat{\mathbf{n}}_{\eta}  \cdot
\partial_{y_iy_j}^2 \bm{\varphi}_{\eta}-B^{ij}, \qquad i,j=1,2
\quad\text{ where }\quad
\widehat{\mathbf{n}}_{\eta} 
=
\frac{\partial_{y_1} \bm{\varphi}_{\eta}  \times \partial_{y_2} \bm{\varphi}_{\eta}}{\vert \partial_{y_1} \bm{\varphi}_{\eta}  \times \partial_{y_2} \bm{\varphi}_{\eta}\vert},
\end{align*}
is due to Roquefort \cite{roquefort2001quelques}, originally proposed by Ciarlet \cite{ciarlet2000modele} and already  appearing in Koiter's seminal work \cite{koiter1966nonlinear}. Indeed, in  a physical setting where no bound is imposed on the shell displacement, the deformed basis vectors $\partial_{y_1} \bm{\varphi}_{\eta}$ and $\partial_{y_2} \bm{\varphi}_{\eta}$ belonging to $\mathcal{B}_\eta,$ may become linearly dependent, even though the corresponding reference vectors $\partial_{y_1} \bm{\varphi}$ and $ \partial_{y_2} \bm{\varphi}$ are linearly independent. In such a case, the  covariant components $R_\eta^{ij}$ of the exact change of curvature tensor is ill-defined,  since the denominator in the normalised normal vector $\widehat{\mathbf{n}}_{\eta}$ vanishes. This motivates the introduction of ${R_\eta^{ij}}^\sharp$, which  prevents such  degeneracy \cite{ciarlet2001justification}.

\subsection{The nonlinear shell model}
To described the elastic energy associated with the deformation of the shell described above, we first introduce
two fourth-order tensors $\mathbb{C}_e=( C_e^{ijkl})_{i,j,k,l =1}^2$  corresponding to the shell elasticity tensor. Its entries
\begin{align*}
C_e^{ijkl}
:=& 
\frac{4\lambda_e\mu_e}{\lambda_e+2\mu_e}(\partial^\star_{y_i}\bm{\varphi} \cdot\partial^\star_{y_j}\bm{\varphi} )(\partial^\star_{y_k}\bm{\varphi} \cdot\partial^\star_{y_l}\bm{\varphi} )
\\&+
2\mu_e\big[(\partial^\star_{y_i}\bm{\varphi} \cdot\partial^\star_{y_k}\bm{\varphi} )(\partial^\star_{y_j}\bm{\varphi} \cdot\partial^\star_{y_l}\bm{\varphi} )
\big]
\\&+2\mu_e\big[(\partial^\star_{y_i}\bm{\varphi} \cdot\partial^\star_{y_l}\bm{\varphi} )(\partial^\star_{y_j}\bm{\varphi} \cdot\partial^\star_{y_k}\bm{\varphi} )\big]
\end{align*}
are the contravariant components of $\mathbb{C}_e$ \cite[Page 162]{ciarlet2005introduction} where $\lambda_e$ and $\mu_e$ are the Lam\'e constants for the elastic shell satisfying
\begin{align}
\label{lameConstants}
3\lambda_e+2\mu_e>0, \qquad \mu_e>0.
\end{align} 
With this, the full nonlinear potential energy is given by the functional
\begin{equation}
\begin{aligned}
\label{koiterEnergyNonlinear}
K(\eta )=& \int_\Gamma
\frac{\epsilon_0}{2}  \mathbb{C}_e:\mathbb{G}_\eta \otimes \mathbb{G}_\eta   
  \dy_\bn 
 +
  \int_\Gamma 
\frac{\epsilon_0^3}{6}   \mathbb{C}_e:\mathbb{R}_\eta^\sharp  \otimes\mathbb{R}_\eta^\sharp   
 \dy_\bn  
 -
  \int_\Gamma
\left( 
\mathbf{g}\cdot\eta\bn
+g\eta
\right)
\dy_\bn
\end{aligned}
\end{equation}  
with the natural decomposition
$K(\eta )=K_m(\eta )+K_f(\eta )-\ell(\eta)$.  Here, $K_m $ is the  membrane part of the stored energy   due to stretching; $K_f$ is the flexural part of the stored energy due to bending; and $\ell$  consists of the force $\mathbf{g}$ applied to the shell from its interior and the surface force $g$ on the shell.

Having obtained  the full nonlinear potential  energy, we can now proceed to derive the equation of motion for the stochastically-constrained shell. For this, we first note that  if the mass density of $\Gamma $ is $\epsilon_0\varrho_s$ where $\varrho_s>0$ is a constant, the shell's momentum is $m=\epsilon_0\varrho_s\dot{\eta}$ so that its kinetic energy is given by
\begin{align*}
T(\dot{\eta})
=
\frac{1}{2}
 \int_\Gamma 
\epsilon_0 \varrho_s\vert \dot{\eta}\vert^2
  \dy_\bn
\end{align*}
recall \eqref{areaElem}.
To force the displacement field $\eta$ to follow the stochastic process above, we define the action to be the almost sure integral representation of the Lagrangian:
\begin{align}\label{mainAction}
&S(\eta,\dot{\eta})= \int_{t_0}^{t_1}\Big\{L(\eta,\dot{\eta})\dt
+
\int_\Gamma
m 
\Big(\dd \eta -\dot{\eta}  \dd t- \tfrac{1}{2}\sum_i\big(\bm{\sigma}_i \cdot\naby  \eta )\circ\dd W_t^i
\Big)\dy_\bn
\Big\} 
\end{align}
where
\begin{equation}
\begin{aligned}\label{ltk}
 L(\eta,\dot{\eta}) &:=T(\dot{\eta})- K(\eta )
\\&=
T(\dot{\eta})-K_m(\eta)-K_f(\eta)+\ell(\eta). 
\end{aligned}
\end{equation}
The stochastically constrained variational principle (Hamilton's principle) states that the actual motion $\eta(t,\by)$ of the shell makes the \textit{Action Integral} stationary over a fixed time interval $[t_1,t_2]$:
\begin{align}\label{zeroAction}
\delta S=0.
\end{align}
We can now seek a trajectory in the space of deformations that satisfies Hamilton's principle above subject to
\begin{align}\label{t0t1zero}
\delta\eta(t_0)=\delta\eta(t_1)=0.
\end{align} 
With this setup in place, we can now state our first main result.
\begin{theorem}\label{thm:main}
Stationarity \eqref{zeroAction} of  the action \eqref{mainAction}-\eqref{ltk} subject to \eqref{t0t1zero} result in the following stochastic Euler-Lagrange equation
\begin{equation}
\begin{aligned}   \nonumber
\epsilon_0\varrho_s  \dd \dot{\eta}  
&+
  \frac{\partial  }{\partial \eta}(K_m(\eta)+K_f (\eta))
 \dt
-
  (
 \mathbf{g}\cdot \bn
+g  )   \dt
\\&= 
\epsilon_0\varrho_s\sum_i \wp\big(\bm{\sigma}_i\cdot\naby \dot{\eta} 
+\tfrac{1}{2}\dot{\eta}\,\mathrm{div}_\by  \bm{\sigma}_i\big)\circ\dd W_t^i 
\end{aligned}
\end{equation}
 where $ \frac{\partial  f}{\partial \eta}$ is the 
  functional derivative of $f$ with respect to $\eta$.
\end{theorem}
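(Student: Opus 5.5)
The plan is to treat \eqref{mainAction} as a constrained action in the three independent fields $(\eta,\dot\eta,m)$, with $m$ acting as a Lagrange multiplier that enforces the Stratonovich constraint \eqref{shellVelocity}. We then compute the three first variations separately and combine them. Because all stochastic integrals are of Stratonovich type, the ordinary chain rule and integration by parts in time are available pathwise. This is what allows the calculus-of-variations argument to be carried out almost surely without Itô corrections. Throughout, we write $J(\by):=\vert \partial_{y_1}\bm{\varphi}\times\partial_{y_2}\bm{\varphi}\vert$ for the density in \eqref{areaElem}. It is time-independent and nonvanishing.

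\emph{Step 1 (variation in $m$).} Varying $m$ in \eqref{zeroAction} returns exactly the constraint $\dd\eta=\dot\eta\dt+\tfrac12\sum_i\bm{\sigma}_i\cdot\naby\eta\circ\dd W^i_t$. Hence the displacement follows \eqref{shellVelocity}.

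\emph{Step 2 (variation in $\dot\eta$).} The kinetic energy $T$ contributes $\int\epsilon_0\varrho_s\dot\eta\,\delta\dot\eta\,\dy_\bn\dt$, and the multiplier term contributes $-\int m\,\delta\dot\eta\,\dy_\bn\dt$. Since $\delta\dot\eta$ is arbitrary, we obtain $m=\epsilon_0\varrho_s\dot\eta$. This is consistent with the momentum introduced before \eqref{mainAction}.

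\emph{Step 3 (variation in $\eta$).} This step produces the dynamics and has three contributions.
\begin{itemize}
\item The potential part gives $-\int\big(\tfrac{\partial}{\partial\eta}(K_m+K_f)-(\mathbf g\cdot\bn+g)\big)\delta\eta\dt$. Here the functional derivatives are defined with respect to the pairing used in $K$. The load term is linear in $\eta$, so its derivative is immediate.
\item The term $\int m\,\dd(\delta\eta)\,\dy_\bn$ is handled by the Stratonovich product rule: $\int_{t_0}^{t_1}m\,\dd\delta\eta=-\int_{t_0}^{t_1}\delta\eta\,\dd m$. The boundary term vanishes by \eqref{t0t1zero}.
\item The transport term $-\tfrac12\int m\,\bm{\sigma}_i\cdot\naby\delta\eta\,J\dy\circ\dd W^i_t$ is integrated by parts on the torus $\Gamma$, so there are no spatial boundary terms. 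It becomes $+\tfrac12\int\mathrm{div}_\by(J m\bm{\sigma}_i)\,\delta\eta\dy\circ\dd W^i_t$.
\end{itemize}
Collecting the coefficient of the arbitrary $\delta\eta$ yields the relation $J\,\dd m+\frac{\partial}{\partial\eta}(K_m+K_f)\dt-(\mathbf g\cdot\bn+g)\dt=\tfrac12\sum_i\mathrm{div}_\by(Jm\bm{\sigma}_i)\circ\dd W^i_t$, with the weight $J$ appearing consistently on each side. Substituting $m=\epsilon_0\varrho_s\dot\eta$ from Step 2 and expanding $\mathrm{div}_\by(Jm\bm{\sigma}_i)=J\bm{\sigma}_i\cdot\naby m+m\,\mathrm{div}_\by(J\bm{\sigma}_i)$ expresses the noise in the transport-plus-divergence form $\bm{\sigma}_i\cdot\naby\dot\eta+c\,\dot\eta\,\mathrm{div}_\by\bm{\sigma}_i$.

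\emph{Main obstacle.} The hardest part is the last step: matching the noise coefficients exactly with the statement. This includes the relative factor between the transport and divergence parts, the placement of the geometric weight $J$, and the role of the operator $\wp$ in the statement. The first thing to try is to read $\wp$ as the operator that absorbs the surface-measure weighting, i.e.\ the $L^2(\dy_\bn)$-adjoint normalisation. The normalisation of the $\tfrac12$ in \eqref{shellVelocity} should then be tracked carefully, using the earlier remark that this factor can be rescaled without loss of generality. A secondary technical point is rigour: the Stratonovich integration by parts must be justified pathwise. This requires enough regularity of $\eta,\dot\eta$ and of $\bm{\sigma}_i\in C^1$, together with the smallness $\Vert\naby\eta\Vert_{L^\infty}\le L$, so that $K$ is differentiable and $\mathbb R^\sharp_\eta$ is well-defined.
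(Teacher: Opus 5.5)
There is a genuine gap, and it sits exactly at the step you flag as the main obstacle. With $(\eta,\dot\eta,m)$ varied independently, your Steps 1--3 are a sound Hamilton--Pontryagin computation. The only noise contribution to the $\eta$-equation is then $\tfrac12 J^{-1}\mathrm{div}_\by(Jm\bm{\sigma}_i)$. Dividing by $J$, with functional derivatives taken with respect to $\dy_\bn$, the computation yields
\[
\epsilon_0\varrho_s\dd\dot\eta+\tfrac{\partial}{\partial\eta}(K_m+K_f)\dt-(\mathbf{g}\cdot\bn+g)\dt=\tfrac{\epsilon_0\varrho_s}{2}\sum_i\bigl(\bm{\sigma}_i\cdot\naby\dot\eta+\dot\eta\,\mathrm{div}_\by\bm{\sigma}_i+\dot\eta\,\bm{\sigma}_i\cdot\naby\log J\bigr)\circ\dd W^i_t .
\]
The statement instead has $\bm{\sigma}_i\cdot\naby\dot\eta+\tfrac12\dot\eta\,\mathrm{div}_\by\bm{\sigma}_i$. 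So the ratio of transport to divergence coefficients is $1:\tfrac12$, not $1:1$. Even for constant $J$ and divergence-free $\bm{\sigma}_i$, your transport coefficient is half the stated one.

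None of your proposed repairs changes this.
\begin{itemize}
\item Rescaling the $\tfrac12$ in \eqref{shellVelocity} multiplies both coefficients by the same factor. In any case that factor is fixed in the action \eqref{mainAction} you are varying.
\item The $\wp$ on the right-hand side of the statement is simply the test function from the final weak identity of the paper's proof, left there by mistake (compare \eqref{e-l}, where it is absent). There is nothing to reinterpret.
\item The weight $J$ only adds the term $\dot\eta\,\bm{\sigma}_i\cdot\naby\log J$, which the statement does not contain either.
\end{itemize}

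The paper reaches its coefficients by a different scheme. It does not treat $m$ as an independent multiplier: it substitutes $m=\epsilon_0\varrho_s\dot\eta$ and perturbs $(\eta,\dot\eta)\mapsto(\eta+\tau\wp,\dot\eta+\tau\dot\wp)$. The inertia $-\epsilon_0\varrho_s\wp\dd\dot\eta$ then comes from $T$. The drift part $m(\dd\eta-\dot\eta\dt)$ is shown to have zero variation, since both pieces are evaluated as $-2\epsilon_0\varrho_s\wp\dd\dot\eta$. The noise term varies into $\dot\wp\,(\bm{\sigma}_i\cdot\naby\eta)+\dot\eta\,(\bm{\sigma}_i\cdot\naby\wp)$:
\begin{itemize}
\item integrating the second term by parts in space gives $-\wp(\bm{\sigma}_i\cdot\naby\dot\eta+\dot\eta\,\mathrm{div}_\by\bm{\sigma}_i)$;
\item integrating the first term by parts in time gives another $-\wp\,\bm{\sigma}_i\cdot\naby\dot\eta$;
\item the prefactor $-\tfrac12$ turns the sum into $\bm{\sigma}_i\cdot\naby\dot\eta+\tfrac12\dot\eta\,\mathrm{div}_\by\bm{\sigma}_i$.
\end{itemize}
It is the $\dot\wp\,(\bm{\sigma}_i\cdot\naby\eta)$ contribution that doubles the transport coefficient, and it has no analogue in your setup.

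Be aware, however, that this contribution survives only because the paper evaluates $\int\dot\wp\dd\eta$ as $-\int\wp\dd\dot\eta$, which drops the noise part of $\dd\eta$. If you keep that part, $\epsilon_0\varrho_s\int\dot\wp\dd\eta$ contains $\tfrac{\epsilon_0\varrho_s}{2}\sum_i\int\dot\wp\,(\bm{\sigma}_i\cdot\naby\eta)\circ\dd W^i_t$. This cancels the noise-term contribution exactly and returns your equal-weight form. The time integration by parts inside a Stratonovich integral is in any case only formal. So you cannot obtain the theorem as stated from your formulation. To reproduce it, you would have to adopt the paper's substitution scheme together with those formal manipulations.
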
 
\begin{remark}
As is standard in elasticity, we refrain from  expressing $ \frac{\partial  f}{\partial \eta}$ in fully expanded coordinate form, since the resulting fourth-order nonlinear PDE  contains several dozen terms. This is not problematic, however, since  our interest later on  (see for example Section \ref{sec:visco}) is  the study of so-called \textit{weak solutions} to the shell equation, for which only the primitive $f$ of $ \frac{\partial  f}{\partial \eta}$ is required.
%
\end{remark} 
\begin{proof}[Proof of Theorem \ref{thm:main}]
By taking the first variation with respect to the displacement field, we obtain
\begin{align*}
\delta S(\eta,\dot{\eta})
&=
\frac{\dd}{\dd\tau} S(\eta+\tau \wp, \dot{\eta}+\tau \dot{\wp})\Big\vert_{\tau=0}, \qquad\qquad \wp(t_0)=\wp(t_1)=0 
\end{align*}
where $\tau>0$ is a scalar and $\wp(t)$ is an arbitrary test function. This result in several equivalent definitions. Indeed, for  $L(\eta,\dot{\eta}) $ the perturbation $\eta\mapsto\eta+\tau \wp$ results in
\begin{equation}
\begin{aligned}\label{lVariation}
\delta  \int_{t_0}^{t_1} L(\eta,\dot{\eta})\dt
&=  
\int_{t_0}^{t_1}\bigg(\frac{\partial L}{\partial \eta}  \wp\dt
+
\frac{\partial L}{\partial \dot{\eta}}   \dd \wp 
\bigg) 
\\
&=  
\int_{t_0}^{t_1} \bigg(\frac{\partial L}{\partial \eta}\wp\dt   
-
 \wp\dd 
\frac{\partial L}{\partial \dot{\eta}}   
\bigg)  
+
\frac{\partial L}{\partial \dot{\eta}}    \wp 
\bigg\vert_{t_0}^{t_1}
\\
&=  
\int_{t_0}^{t_1} \bigg(\frac{\partial L}{\partial \eta}\wp\dt   
-
\wp
 \dd 
\frac{\partial L}{\partial \dot{\eta}}   
\bigg).  
\end{aligned}
\end{equation}  
However, since $L$ decomposes linearly according to \eqref{ltk}, the (bi)-linear deterministic terms satisfy
\begin{align*}
&\int_{t_0}^{t_1}\bigg(\frac{\partial T}{\partial \eta}   
 \wp \dt-
  \wp\dd 
\frac{\partial T}{\partial \dot{\eta}}   
\bigg) 
=
-
\int_{t_0}^{t_1} \int_\Gamma \wp\,
\epsilon_0\varrho_s  \dd \dot{\eta} 
 \dy_\bn  
\\&
\int_{t_0}^{t_1}\bigg(\frac{\partial \ell}{\partial \eta} \wp\dt  
-
\wp
 \dd 
\frac{\partial \ell}{\partial \dot{\eta}}   
\bigg)   
=
\int_{t_0}^{t_1} \int_\Gamma
\left( 
\mathbf{g}\cdot \bn
+g 
\right) \wp  \dy_\bn \dt.
\end{align*} 
Consequently, it follows from \eqref{ltk} and \eqref{lVariation} that
\begin{equation}
\begin{aligned}\label{lVariation1}
\delta  \int_{t_0}^{t_1} L(\eta,\dot{\eta})\dt
=  
&-
\int_{t_0}^{t_1} \int_\Gamma \wp\,
\epsilon_0\varrho_s  \dd \dot{\eta} 
 \dy_\bn  
 -
 \int_{t_0}^{t_1} \frac{\partial  }{\partial \eta}(K_m+K_f )\wp \dt   
\\&+
 \int_{t_0}^{t_1} \int_\Gamma
 (
 \mathbf{g}\cdot \bn
+g  )  \wp  \dy_\bn \dt.
\end{aligned}
\end{equation}
We now recall that $m=\epsilon_0\varrho_s\dot{\eta}$. Thus, after integrating by parts and using $\wp(t_0)=\wp(t_1)=0 $, we obtain that
\begin{align*}
\delta  \int_{t_0}^{t_1} \int_\Gamma
m 
\,\dd \eta  \dy_\bn
&=   
\int_{t_0}^{t_1} \int_\Gamma
\epsilon_0\varrho_s \big(\dot{\eta} \dd\wp
+
\dot{\wp}\dd \eta  
\big)\dy_\bn 
\\
&=   -2
\int_{t_0}^{t_1} \int_\Gamma
\epsilon_0\varrho_s  \wp\dd\dot{\eta}   \dy_\bn 
\end{align*}
and
\begin{align*}
\delta  \int_{t_0}^{t_1} \int_\Gamma
m \,\dot{\eta}  \dd t \dy_\bn
&=  
\int_{t_0}^{t_1} \int_\Gamma\epsilon_0\varrho_s \bigg(\frac{\partial  \dot{\eta}^2}{\partial \eta} \wp\dt  
-
\wp \dd 
\frac{\partial  \dot{\eta}^2}{\partial \dot{\eta}}   
\bigg)   \dy_\bn
\\
&=  -2
\int_{t_0}^{t_1}\epsilon_0\varrho_s \wp \dd 
 \dot{\eta} \dy_\bn.
\end{align*}
Thus,
\begin{align}
\label{zeroDeter}
&\delta\int_{t_0}^{t_1}
\int_\Gamma
m\cdot
\big(\dd \eta -\dot{\eta}  \dd t
\big)\dy_\bn
  =0.
\end{align}
Finally, for the Stratonovich noise which satisfies the usual deterministic calculus, we have
\begin{equation}
\begin{aligned}\label{noneZerosStoch}
\delta  \int_{t_0}^{t_1} \int_\Gamma
m \sum_i\big(\bm{\sigma}_i \cdot\naby \eta)\circ\dd W_t^i
 \dy_\bn
&=  
  \int_{t_0}^{t_1} \int_\Gamma
\epsilon_0\varrho_s\sum_i  \dot{\wp}\big(\bm{\sigma}_i \cdot\naby \eta)\circ\dd W_t^i\dy_\bn
\\
&\quad+
  \int_{t_0}^{t_1} \int_\Gamma
\epsilon_0\varrho_s\sum_i \dot{\eta}\big(\bm{\sigma}_i \cdot  \naby\wp\big)\circ\dd W_t^i\dy_\bn
\\
&=  
- 2 \int_{t_0}^{t_1} \int_\Gamma
\epsilon_0\varrho_s\sum_i  \wp\big(\bm{\sigma}_i \cdot\naby \dot{\eta})\circ\dd W_t^i\dy_\bn
\\
&\quad-
  \int_{t_0}^{t_1} \int_\Gamma
\epsilon_0\varrho_s\sum_i \wp\,\dot{\eta}\,\mathrm{div}_\by\big(\bm{\sigma}_i \big)\circ\dd W_t^i\dy_\bn.
\end{aligned}
\end{equation}
If we now combine \eqref{lVariation1}, \eqref{zeroDeter} and \eqref{noneZerosStoch}, then it follows from \eqref{zeroAction} that
\begin{equation}
\begin{aligned}  \nonumber
\int_{t_0}^{t_1} \int_\Gamma \wp\,
\epsilon_0\varrho_s  \dd \dot{\eta} 
 \dy_\bn  
&+
 \int_{t_0}^{t_1}  
 \frac{\partial  }{\partial \eta}(K_m+K_f ) \wp 
 \dt
-
 \int_{t_0}^{t_1} \int_\Gamma
(
 \mathbf{g}\cdot \bn
+g  )  \wp   \dy_\bn \dt
\\&=
  \int_{t_0}^{t_1} \int_\Gamma
\epsilon_0\varrho_s\sum_i \wp\big(\bm{\sigma}_i\cdot\naby \dot{\eta} 
+\tfrac{1}{2}\dot{\eta}\,\mathrm{div}_\by  \bm{\sigma}_i\big)\circ\dd W_t^i\dy_\bn
\end{aligned}
\end{equation}
for any smooth test function $\wp$ satisfying $\wp(t_0)=\wp(t_1)=0$. Since this integral holds for any such test function, we immediately obtain the desired Euler--Lagrange equation. 
\end{proof}

\subsection{The linearized shell model}
To linearise the shell, we work in the same small-displacement regime already assumed above to ensure that the deformed basis $\mathcal{B}_\eta$ remains non-degenerate. 
We further use  \eqref{deformEta} along with   the identity 
\begin{equation}\label{eq:IdentityEta}
\mathbf{n} \cdot
 \partial_{y_j}\bm{\varphi}= \partial_{y_i}\bm{\varphi}\cdot \mathbf{n}=0
 \end{equation}
 to expand the covariant components $A_\eta^{ij}$, $i,j=1,2$ of the metric tensor $\mathbb{A}_\eta $  of  the deformed middle surface  $ \bm{\varphi}_{\eta(t)}( \Gamma )$ as
\begin{align*} 
A_\eta^{ij}=& \partial_{y_i} \bm{\varphi}  \cdot  \partial_{y_j} \bm{\varphi}  +  
\partial_{y_i} \bm{\varphi}  \cdot  \eta  \partial_{y_j}\mathbf{n} +   \eta \partial_{y_i}\mathbf{n} \cdot
  \partial_{y_j} \bm{\varphi}
\\&\quad  
  + 
   \partial_{y_i} \eta   \partial_{y_j} \eta 
   + 
    \partial_{y_i} \eta\,\mathbf{n}  \cdot  \eta  \partial_{y_j}\mathbf{n} 
  \\&\quad + 
   \eta \partial_{y_i}\mathbf{n} \cdot \partial_{y_j} \eta  \mathbf{n} 
  + 
    \eta  \partial_{y_i}\mathbf{n} \cdot  \eta  \partial_{y_j}\mathbf{n}.
\end{align*}   
Since only the first three terms in the expansion  above are linear (or affine) with respect to $\eta$, the covariant components of the linearised change of metric tensor $\mathbb{G}_\eta^{\mathrm{lin}}$ are given by
\begin{align*}
{G_\eta^{ij}}^{\mathrm{lin}} :=& \frac{1}{2}(A^{ij}_\eta-A^{ij})^{\mathrm{lin}}
\\
=& \frac{1}{2}\eta \big[\partial_{y_i} \bm{\varphi}  \cdot   \partial_{y_j}\mathbf{n} 
  +   \partial_{y_i}\mathbf{n} \cdot
  \partial_{y_j} \bm{\varphi} 
  \big], \qquad i,j=1,2.
\end{align*} 
Thus, recalling \eqref{nby} and using the properties of the scalar triple product together with \eqref{eq:IdentityEta}, we obtain 
\begin{align*}
\mathbb{G}_\eta^{\mathrm{lin}}=\frac{\eta }{\vert \partial_{y_1} \bm{\varphi}  \times \partial_{y_2} \bm{\varphi}  \vert}
\begin{pmatrix}
\partial_{y_1} \bm{\varphi}  \cdot(\partial_{y_1y_1}^2 \bm{\varphi} 
\times \partial_{y_2} \bm{\varphi} )
&
\partial_{y_1} \bm{\varphi}  \cdot(\partial_{y_1y_2}^2 \bm{\varphi} 
\times \partial_{y_2} \bm{\varphi} )\\
\partial_{y_1} \bm{\varphi}  \cdot(\partial_{y_1y_2}^2 \bm{\varphi} 
\times \partial_{y_2} \bm{\varphi} )
&
\partial_{y_1} \bm{\varphi} \cdot(\partial_{y_2y_2}^2 \bm{\varphi} 
\times \partial_{y_2} \bm{\varphi} ) 
\end{pmatrix}.
\end{align*} 
To derive  the  linearised modified change of curvature tensor $\mathbb{R}_\eta^{\sharp\mathrm{lin}}$, we first   extract the linear part of the covariant  components $B^{ij}_\eta$ of the curvature tensor. For this purpose, we note that  
\begin{align*}
B_\eta^{ij} 
:=& \mathbf{n}_{\eta }  \cdot
\partial_{y_iy_j}^2 \bm{\varphi}_{\eta } 
\\
=&(\partial_{y_1} \bm{\varphi}_{\eta} \times \partial_{y_2} \bm{\varphi}_{\eta}) \cdot 
\big(\partial_{y_iy_j}^2 \bm{\varphi}  
+ 
\partial_{y_iy_j}^2(\eta \bn ) 
\big)
\\
 =&
b_\eta^{ij}   
 +
 \mathrm{n.t.} 
\end{align*}
where $ \mathrm{n.t.} $ denotes the nonlinear terms (with respect to $\eta$), given by
\begin{align*}
\mathrm{n.t.}=& 
\Big[\big(\partial_{y_1}(\eta \mathbf{n}  )  \times  \partial_{y_2} \bm{\varphi}  \big) 
+
   \big(\partial_{y_1} \bm{\varphi}  \times  \partial_{y_2}(\eta  \mathbf{n}  )   \big)
 +
  \big(\partial_{y_1}(\eta \mathbf{n}   )  \times  \partial_{y_2}(\eta  \mathbf{n}  )  
  \big)
  \Big] \cdot 
\partial_{y_iy_j}^2(\eta \bn 
\big)
\\
&+
  \big(\partial_{y_1}(\eta \mathbf{n}  )  \times  \partial_{y_2}(\eta  \mathbf{n} )  
  \big) \cdot
 \partial_{y_iy_j}^2 \bm{\varphi},
\end{align*}
and
\begin{align*}
b_\eta^{ij} 
:=&
\vert \partial_{y_1} \bm{\varphi}  \times \partial_{y_2} \bm{\varphi} \vert 
\big(B^{ij}
+ 
\bn  \cdot \partial_{y_iy_j}^2(\eta \bn ) 
\big)
-
 \partial_{y_1}(\eta  \mathbf{n}  )  \cdot(    \partial_{y_iy_j}^2 \bm{\varphi} \times \partial_{y_2} \bm{\varphi}  )
-
  \partial_{y_2}(\eta  \mathbf{n}   )\cdot
(\partial_{y_1} \bm{\varphi}   \times  \partial_{y_iy_j}^2 \bm{\varphi}   )  .
\end{align*} 
Consequently, 
\begin{align*} 
{\vert \partial_{y_1} \bm{\varphi} \times \partial_{y_2} \bm{\varphi} \vert}^{-1} B_\eta^{ij}
=&   B^{ij}
+ 
\bn  \cdot \partial_{y_iy_j}^2(\eta \bn )   
-
  \partial_{y_1}(\eta  \mathbf{n}   )\cdot
  \mathbf{b}_{ij}^2
  -
  \partial_{y_2}(\eta  \mathbf{n}   )\cdot
  \mathbf{b}_{ij}^1
  +(
 \mathrm{n.t.})_2,  
\end{align*}
where
\begin{align*}
(
 \mathrm{n.t.})_2:=\frac{ 
 \mathrm{n.t.}}{\vert \partial_{y_1} \bm{\varphi} \times \partial_{y_2} \bm{\varphi} \vert}
\end{align*}
and
\begin{align*}
  \mathbf{b}_{ij}^1
  :=
  \frac{(\partial_{y_1} \bm{\varphi}   \times  \partial_{y_iy_j}^2 \bm{\varphi}   )  }{\vert \partial_{y_1} \bm{\varphi} \times \partial_{y_2} \bm{\varphi} \vert}
  ,\qquad
    \mathbf{b}_{ij}^2
  :=
  \frac{(   \partial_{y_iy_j}^2 \bm{\varphi} \times \partial_{y_2} \bm{\varphi}  )  }{\vert \partial_{y_1} \bm{\varphi} \times \partial_{y_2} \bm{\varphi} \vert}.
\end{align*} 
The covariant components of the linearised modified change of curvature tensor $\mathbb{R}_\eta^{\sharp \mathrm{lin}}$ is, therefore,
 \begin{equation}
\begin{aligned}\nonumber
 {R_\eta^{ij}}^{\sharp \mathrm{lin}} :=& \big({\vert \partial_{y_1} \bm{\varphi} \times \partial_{y_2} \bm{\varphi} \vert}^{-1} B_\eta^{ij}  -B^{ij}\big)^{\mathrm{lin}}
\\
=&
\bn  \cdot \partial_{y_iy_j}^2(\eta \bn )   
-
  \partial_{y_1}(\eta  \mathbf{n}   )\cdot
  \mathbf{b}_{ij}^2
  -
  \partial_{y_2}(\eta  \mathbf{n}   )\cdot
  \mathbf{b}_{ij}^1.
\end{aligned}  
\end{equation} 
Thus, the linearised change of curvature tensor $\mathbb{R}_\eta^{\sharp \mathrm{lin}}$ is 
\begin{align*}
 \mathbb{R}_\eta^{\sharp \mathrm{lin}}= 
\begin{pmatrix} 
\bn\cdot\partial_{y_1y_1}^2(\eta\bn)  - \partial_{y_1}(\eta\bn)  \cdot 
  \mathbf{b}_{11}^2
  - \partial_{y_2}(\eta\bn)  \cdot 
  \mathbf{b}_{11}^1
  &
   \bn\cdot\partial_{y_1y_2}^2(\eta\bn)  - \partial_{y_1}(\eta\bn)  \cdot
  \mathbf{b}_{12}^2
  - \partial_{y_2}(\eta\bn)  \cdot
  \mathbf{b}_{12}^1
\\
\bn\cdot\partial_{y_2y_1}^2(\eta\bn)  - \partial_{y_1}(\eta\bn) \cdot
  \mathbf{b}_{21}^2
  - \partial_{y_2}(\eta\bn) \cdot
  \mathbf{b}_{21}^1 
  &
  \bn\cdot\partial_{y_2y_2}^2(\eta\bn)  - \partial_{y_1}(\eta\bn)  \cdot
  \mathbf{b}_{22}^2
  - \partial_{y_2}(\eta\bn)  \cdot
  \mathbf{b}_{22}^1
\end{pmatrix}.
\end{align*}  
In analogy with the nonlinear energy \eqref{koiterEnergyNonlinear}, the linearised  energy is given by the functional
\begin{equation}
\begin{aligned}\nonumber
K^\mathrm{lin}(\eta)=& \int_\Gamma
\frac{\epsilon_0}{2} \mathbb{C}_e:\mathbb{G}_\eta^{\mathrm{lin}} \otimes \mathbb{G}_\eta^{\mathrm{lin}}   \dy_\bn 
 +
  \int_\Gamma 
\frac{\epsilon_0^3}{6}   \mathbb{C}_e:\mathbb{R}_\eta^{\sharp \mathrm{lin}}  \otimes\mathbb{R}_\eta^{\sharp\mathrm{lin}}   \dy_\bn  
 -
  \int_\Gamma
\left( 
\mathbf{g}\cdot\eta\bn
+ g\eta
\right)  \dy_\bn 
\end{aligned}
\end{equation} 
with the natural decomposition
$K^\mathrm{lin}(\eta )=K^\mathrm{lin}_m(\eta )+K^\mathrm{lin}_f(\eta )-\ell(\eta)$.  
As a result, if the mass density of $\Gamma$ is $\epsilon_0\varrho_s$ where $\varrho_s>0$ is a constant, the weak formulation of the linearised elastic shell is given by  
\begin{equation}
\begin{aligned}  \label{koiterEnergyLinear1}
\int_{t_0}^{t_1} \int_\Gamma \wp\,
\epsilon_0\varrho_s  \dd \dot{\eta} 
 \dy_\bn  
&
+
 \int_{t_0}^{t_1}  
 \frac{\partial  }{\partial \eta}(K^{\mathrm{lin}}_m+K^{\mathrm{lin}}_f ) \wp 
 \dt
-
 \int_{t_0}^{t_1} \int_\Gamma
 (
 \mathbf{g}\cdot \bn
+g  )  \wp  \dy_\bn \dt 
\\&=
  \int_{t_0}^{t_1} \int_\Gamma
\epsilon_0\varrho_s\sum_i \wp\big(\bm{\sigma}_i\cdot\naby \dot{\eta} 
+\tfrac{1}{2}\dot{\eta}\,\mathrm{div}_\by  \bm{\sigma}_i\big)\circ\dd W_t^i\dy_\bn
\end{aligned}
\end{equation}
for all test functions satisfying $\wp(t_0)=\wp(t_1)=0 $. The following result is, therefore,  a direct  corollary of Theorem \ref{thm:main}:  
\begin{corollary}\label{cor:main}
Stationarity \eqref{zeroAction} of  the action \eqref{mainAction} subject to \eqref{t0t1zero}
where
\begin{align*}
 L(\eta,\dot{\eta}) &:=T(\dot{\eta})- K(\eta )
\\&=
T(\dot{\eta})-K^{\mathrm{lin}}_m(\eta)-K^{\mathrm{lin}}_f(\eta)+\ell(\eta)
\end{align*} results in the following stochastic Euler-Lagrange equation
\begin{equation}
\begin{aligned}   \nonumber
\epsilon_0\varrho_s  \dd \dot{\eta}  
&+
  \frac{\partial  }{\partial \eta}(K^{\mathrm{lin}}_m(\eta)+K^{\mathrm{lin}}_f (\eta))
 \dt
-
  (
 \mathbf{g}\cdot \bn
+g  )   \dt
\\&= 
\epsilon_0\varrho_s\sum_i \wp\big(\bm{\sigma}_i\cdot\naby \dot{\eta} 
+\tfrac{1}{2}\dot{\eta}\,\mathrm{div}_\by  \bm{\sigma}_i\big)\circ\dd W_t^i 
\end{aligned}
\end{equation}
 where $ \frac{\partial  f}{\partial \eta}$ is the 
  functional derivative of $f$ with respect to $\eta$.
\end{corollary}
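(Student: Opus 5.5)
The plan is to rerun the proof of Theorem \ref{thm:main} with $K_m,K_f$ replaced by $K^{\mathrm{lin}}_m,K^{\mathrm{lin}}_f$. That argument never used the specific form of the stored energy, only two facts. First, $L$ splits additively as in \eqref{ltk}. Second, the stored energy depends on $\eta$ alone and not on $\dot\eta$, so the Gateaux derivative $\frac{\dd}{\dd\tau}K(\eta+\tau\wp)\big\vert_{\tau=0}$ exists and defines $\frac{\partial K}{\partial\eta}\wp$. So I will first check that both facts hold in the linearised setting and then invoke the theorem's computation verbatim.

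For the first fact, $K^{\mathrm{lin}}$ decomposes as $K^{\mathrm{lin}}_m+K^{\mathrm{lin}}_f-\ell$. The load term $\ell$ is identical to the one in \eqref{koiterEnergyNonlinear}, and $T$ is unchanged. Hence $L=T-K^{\mathrm{lin}}_m-K^{\mathrm{lin}}_f+\ell$ has the same structure as \eqref{ltk}.

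For the second fact, I would note that $\mathbb{G}^{\mathrm{lin}}_\eta$ and $\mathbb{R}^{\sharp\mathrm{lin}}_\eta$ are linear in $\eta$. The first involves only $\eta$ itself; the second involves $\eta$ and its derivatives up to order two, with coefficients built from $\bm{\varphi}\in C^3$, $\bn$ and $\mathbf{b}^{1,2}_{ij}$. The tensor $\mathbb{C}_e$ is fixed. Therefore $K^{\mathrm{lin}}_m$ and $K^{\mathrm{lin}}_f$ are quadratic forms in $\eta$, and their variation in the direction $\wp$ is explicitly
\begin{align*}
\frac{\partial K^{\mathrm{lin}}_m}{\partial\eta}\wp=\int_\Gamma\epsilon_0\,\mathbb{C}_e:\mathbb{G}^{\mathrm{lin}}_\eta\otimes\mathbb{G}^{\mathrm{lin}}_\wp\dy_\bn
\end{align*}
(using the symmetry of $\mathbb{C}_e$). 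The analogous formula with factor $\epsilon_0^3/3$ holds for the flexural part. This derivative is well defined for smooth $\eta,\wp$ and makes sense in the weak form \eqref{koiterEnergyLinear1}.

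With these two points in hand, the remaining steps are unchanged from Theorem \ref{thm:main}:
\begin{itemize}
\item the computation \eqref{lVariation}--\eqref{lVariation1} for the Lagrangian part, using $\wp(t_0)=\wp(t_1)=0$;
\item the cancellation \eqref{zeroDeter} of the deterministic constraint terms;
\item the Stratonovich integration by parts in time and space \eqref{noneZerosStoch}, where the periodicity of $\Gamma$ removes boundary terms in $\by$.
\end{itemize}
Combining them gives \eqref{koiterEnergyLinear1} for all admissible $\wp$. The fundamental lemma of the calculus of variations then yields the stated equation.

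I expect the only real obstacle to be justifying that the symbol $\frac{\partial}{\partial\eta}$ is meaningful. This is exactly the symmetric-bilinear-form computation above. In the nonlinear case the derivative was abstract, but here it is explicit and even simpler, so no new difficulty arises.
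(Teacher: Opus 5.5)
Your proposal is correct and follows the same route as the paper, which simply calls the corollary a direct consequence of Theorem~\ref{thm:main}; that proof uses only the additive splitting of $L$ and the fact that the stored energy depends on $\eta$ alone, exactly the two facts you check, and your explicit bilinear formula for $\frac{\partial K^{\mathrm{lin}}_m}{\partial\eta}\wp$ (with factor $\epsilon_0^3/3$ for the flexural part) just makes concrete what the paper leaves abstract. One caveat: the fundamental lemma gives the equation without the stray factor $\wp$ on the right-hand side, which is a typo carried over from the statement of Theorem~\ref{thm:main} and absent from the paper's own equation \eqref{e-l}.
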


\section{Prototype for the simplified linear model.} 
\label{sec:proto}
We recall that the Euler--Lagrange equations for both the nonlinear Koiter shell (Theorem \ref{thm:main}) and the linear Koiter shell (Corollary \ref{cor:main}) are expressed  in terms of  abstract functional derivatives of their respective  membrane and flexural energies. In order to obtain an exact expression for these derivatives, we now consider  a simplified linear model, in which  some lower-order terms are neglected and  the coefficients in the potential energy are assumed to be constant. In particular, we assume that the parameter
\begin{align*}
  \nu_e:=\frac{\epsilon_0}{4}   C_e^{ijkl} \big[\partial_{y_i} \bm{\varphi}  \cdot   \partial_{y_j}\mathbf{n} 
  +   \partial_{y_i}\mathbf{n} \cdot
  \partial_{y_j} \bm{\varphi} 
  \big], 
\end{align*}
is a uniform constant. Then, since 
\begin{align*}
{G_\eta^{ij}}^{\mathrm{lin}} 
=& \frac{1}{2}\eta \big[\partial_{y_i} \bm{\varphi}  \cdot   \partial_{y_j}\mathbf{n} 
  +   \partial_{y_i}\mathbf{n} \cdot
  \partial_{y_j} \bm{\varphi} 
  \big], 
\end{align*} 
it follows that 
\begin{equation}
\begin{aligned}
\label{koiterEnergyLinearG}
K_{m,s}^\mathrm{lin}(\eta)
:=
 \int_\Gamma
\frac{\epsilon_0}{2}  \mathbb{C}_e:\mathbb{G}_\eta^{\mathrm{lin}} \otimes \mathbb{G}_\eta^{\mathrm{lin}}    \dy_\bn 
=
\frac{\nu_e}{2}
 \int_\Gamma     \eta^2 \dy_\bn.
\end{aligned}
\end{equation}  
Importantly, this simplification is justified by the coercivity of the membrane energy. Indeed, since $\mathbb{G}_\eta^{\mathrm{lin}}$ is symmetric, and the Lam\'e coefficients $\lambda_e, \mu_e$  satisfy \eqref{lameConstants}, it  follows from \cite[Theorem 4.4-1]{ciarlet2005introduction} that there exist a constant $c=c(\lambda_e,\mu_e,\epsilon_0,\bm{\varphi},\Gamma)>0$ such that
\begin{align*}
& \int_\Gamma
\frac{\epsilon_0}{2} \mathbb{C}_e:\mathbb{G}_\eta^{\mathrm{lin}} \otimes \mathbb{G}_\eta^{\mathrm{lin}}   \dy_\bn \geq \frac{c}{2} \int_\Gamma
\eta^2   \dy_\bn.
\end{align*}
We may, therefore,  choose $\nu_e=c>0$ consistently  with this coercivity bound, so that, in view of  \eqref{koiterEnergyLinearG}, we derive that 
\begin{align} \label{koiterEnergyLinear1a}
 &\int_{t_0}^{t_1}\frac{\partial K^{\mathrm{lin}}_{m,s}}{\partial \eta} \wp    \dt 
 =  
\int_{t_0}^{t_1} \int_\Gamma
 \nu_e  \eta  \, \wp \dy_\bn \dt.
\end{align} 
We now derive a simplified expression for the flexural energy associated with  the linearised change of curvature tensor. We start by neglecting lower-order terms and, in analogy with the membrane case,  assume that the coefficients of the higher-order terms are constant. This leads to 
\begin{equation}
\begin{aligned}\nonumber
K_{f,s}^\mathrm{lin}(\eta)
:=
  \int_\Gamma 
\frac{\epsilon_0^3}{6}&   \mathbb{C}_e:\mathbb{R}_\eta^{\sharp\mathrm{lin}}  \otimes\mathbb{R}_\eta^{\sharp\mathrm{lin}}    \dy_\bn  
=
\frac{1}{2}
  \int_\Gamma 
 (   \alpha|\nabla^2_{\by}\eta|^2+\beta |\nabla_{\by}\eta|^2) \dy_\bn  
\end{aligned}
\end{equation}
where the terms $\alpha>0$ and $\beta>0$ are constants. Then, considering the perturbation $\eta\mapsto\eta+\tau \wp$, we obtain after integrating by parts, that
\begin{align*}
\delta \int_{t_0}^{t_1} \int_\Gamma\frac{\alpha}{2}\vert\naby^2\eta\vert^2\dy_\bn\dt
&=  
\frac{\dd}{\dd\tau}
\int_{t_0}^{t_1}\int_\Gamma\frac{\alpha}{2}
\big(\vert\naby^2\eta \vert^2
+2\tau
\naby^2 \eta:\naby^2\wp 
+
\tau^2
\vert\naby^2 \wp\vert^2
\big)
\dy_\bn
\dt\Big\vert_{\tau=0}
\\
&=  \alpha
\int_{t_0}^{t_1}\int_\Gamma \naby^2 \eta:\naby^2\wp 
\dy_\bn
\dt
\\
&=  \alpha
\int_{t_0}^{t_1}\int_\Gamma \wp\,\Delta_\by^2 \eta  
\dy_\bn
\dt
\end{align*}
and similarly,
\begin{align*}
\delta \int_{t_0}^{t_1} \int_\Gamma\frac{\beta}{2}\vert\naby\eta\vert^2\dy_\bn\dt
&=    
\frac{\dd}{\dd\tau}
\int_{t_0}^{t_1}\int_\Gamma\frac{\beta}{2}
\big(\vert\naby\eta \vert^2
+2\tau
\naby \eta\cdot\naby\wp 
+
\tau^2
\vert\naby \wp\vert^2
\big)
\dy_\bn
\dt\Big\vert_{\tau=0}
\\
&=  \beta
\int_{t_0}^{t_1}\int_\Gamma \naby \eta\cdot\naby\wp 
\dy_\bn
\dt
\\
&= - \beta
\int_{t_0}^{t_1}\int_\Gamma \wp\, \Delta_\by \eta  
\dy_\bn
\dt.
\end{align*}
Therefore,
\begin{align} \label{koiterEnergyLinear1b}
 &\int_{t_0}^{t_1}\frac{\partial K^{\mathrm{lin}}_{f,s}}{\partial \eta}    \wp\dt
 =  
\int_{t_0}^{t_1} \int_\Gamma
 (    \alpha \Delta_{\by}^2\eta- \beta \Delta_{\by}\eta )   \wp \dy_\bn \dt.
\end{align}
Substituting \eqref{koiterEnergyLinear1a} and 
\eqref{koiterEnergyLinear1b} into \eqref{koiterEnergyLinear1}, we   arrive at 
\begin{equation}
\begin{aligned}  \nonumber
\int_{t_0}^{t_1} \int_\Gamma \wp\,
\epsilon_0\varrho_s  \dd \dot{\eta} 
 \dy_\bn  
&+
 \int_{t_0}^{t_1} \int_\Gamma
 \wp ( \nu_e\eta  + \alpha \Delta_{\by}^2\eta- \beta \Delta_{\by}\eta
 -
 \mathbf{g}\cdot \bn
-g  )   \dy_\bn \dt
\\&=
  \int_{t_0}^{t_1} \int_\Gamma
\epsilon_0\varrho_s\sum_i \wp\big(\bm{\sigma}_i\cdot\naby \dot{\eta} 
+\tfrac{1}{2}\dot{\eta}\,\mathrm{div}_\by  \bm{\sigma}_i\big)\circ\dd W_t^i\dy_\bn
\end{aligned}
\end{equation}
for all test functions $\wp$  satisfying $\wp(t_0)=\wp(t_1)=0 $. Since this identity holds for all such test functions, we deduce the Euler--Lagrange equation 
\begin{equation}
\begin{aligned}   \label{e-l}
\epsilon_0\varrho_s  \dd \dot{\eta}  
&+ 
( \nu_e\eta +   \alpha \Delta_{\by}^2\eta- \beta \Delta_{\by}\eta  
 -
 \mathbf{g}\cdot \bn
-g  )   \dt
\\&= 
\epsilon_0\varrho_s\sum_i   \big(\bm{\sigma}_i\cdot\naby \dot{\eta} 
+\tfrac{1}{2}\dot{\eta}\,\mathrm{div}_\by  \bm{\sigma}_i\big)\circ\dd W_t^i,\qquad\qquad\text{ on}\qquad I\times\Gamma
\end{aligned}
\end{equation} 
complemented by suitable initial conditions
for $\eta$ and $\dot{\eta}$.  Importantly,  a purely elastic, divergence-free variant of \eqref{e-l} has already seen application in fluid-structure interaction \cite{breit2024martingale}.
 
\subsection{The coefficients}
We conclude this section with a brief discussion of the various terms in \eqref{e-l} as they are intrinsic to the generalised nonlinear and linear Koiter shell models explored earlier. The zero-order term $\nu_e\eta$ represents stiffness due to shell curvature and serves as a damping term. Its coefficient $\nu_e$ changes sign depending on whether the shell is pulled tight or compressed. When $\nu_e>0$, the shell is in a stable regime, where tension acts as an additional restoring force. In contrast, the unstable regime corresponds to $\nu_e<0$, where compression acts as a softening force that can  lead to buckling. In the neutral regime
$\nu_e=0$, restoring forces arise solely from bending, as  encoded by the fourth-order bending elasticity. This neutral regime corresponds to the behaviour observed in  plates.  As already mentioned, $\alpha\Delta_\by^2\eta$ is the fourth-order bending elasticity term. It constitutes, probably, the most important feature of thin-shell models, as it captures the rigidity or stiffness of bending. Its coefficient $\alpha>0$ is related to the thickness of the shell and the larger it is, the thicker the shell is.
The membrane elasticity operator
$-\beta\Delta_\by\eta$ can be viewed as the second-order analogue of the fourth-order bending elasticity term. While  the latter accounts for bending stress, the former carries information on membrane stress. When  $\beta>0$, the shell is in a stable regime, where  stiffness resists deformation. However, when  $\beta<0$, the shell enters an unstable regime, where  negative stiffness  amplifies deformation and may lead to buckling. In the neutral regime $\beta=0$, there is no membrane load, and the shell exhibit purely bending behaviour.

 \section{Viscoelastic effect from transport noise}\label{sec:visco}
 Recent works \cite{flandoli2021scaling, flandoli2020convergence, flandoli2021high,  galeati2020convergence} have demonstrated the regularising effects of transport noise in fluids. Our goal in this section is to demonstrate that a similar result can be applied to the elastic materials under study. Indeed, we show that for a suitably chosen family of transport noise, certain solutions of the constraint models derived in the previous section,  that  are parametrised by this family of noise, regularises the shell equation in a certain asymptotic regime. For the purpose of clarity, we will demonstrate this result only for the prototype \eqref{e-l} but this result also applies to the earlier linearised Koiter shell model in  Corollary \ref{cor:main}. The nonlinear Koiter shell models in Theorem \ref{thm:main}, however, will require additional work and is currently not covered by the subsequent analysis.
 
To begin with, we wish to choose $\bm{\sigma}_i$ as divergence-free vector fields where the enumeration $i$ corresponds to the
increasing rearrangement $|\bk|^2$ of the modes or wavevectors $\bk$ in the punctured lattice $\mathbb{Z}^2_{\bm{0}}:=\mathbb{Z}^2\setminus\{\bm{0}\}$. Since the shell unknown  is a real-valued function, it is expected that any driving force for its evolution is also real-valued.  Somewhat ironically,  however,  to obtain our desired dissipation effect in the shell, we first need to construct ``artificial complex-valued" Brownian motions. Their role is purely analytical, as it allows us to write the transport noise in a complex Fourier basis while ensuring, through the pairing of opposite modes, that the resulting Fourier series has real coefficients and hence defines a real-valued forcing. This would become clearer as we proceed with the construction. 
First, we consider a disjoint partition of the punctured lattice $\mathbb{Z}^2_{\bm{0}}=\mathbb{Z}^2_{+}\cup \mathbb{Z}^2_{-}$ where
\begin{align*}
\mathbb{Z}^2_{+}:=\{\bk=(k_1,k_2)\in\mathbb{Z}^2_{\bm{0}}
\,:\, (k_1>0) \text{ or }(k_1=0,k_2>0)\}
\end{align*}
represents the  ``positive" half of the integer lattice $\mathbb{Z}$ based on lexicographical ordering and $\mathbb{Z}^2_{-}=-\mathbb{Z}^2_{+}$ represents the ``negative" half. We now consider the family $(W_t^\bk)_{t\geq0}$ of complexified Brownian motions defined by
\begin{align}\label{BMimaginary}
W_t^\bk
= \left\{
  \begin{array}{lr}
 B_t^\bk+i \,B_t^{-\bk} & \text{ if }\bk\in \mathbb{Z}^2_{+}\\
B_t^{-\bk}-i \,B_t^{\bk} & \text{ if }\bk\in \mathbb{Z}^2_{-}
  \end{array}
\right.
\end{align}
where $(B_t^\bk)_{t\geq0}$ is the usual family of real-valued, independent, identically distributed Brownian motions. Note that since the quadratic covariation of two complex martingales $M=X+iY$ and $N=U+iV$ is given by
\begin{align*}
[M,N]=[X,U]-[Y,V]+i \big([X,V] +[Y,U]\big),
\end{align*}
it follows that
\begin{align}\label{qv}
\left[W_t^\bk,W_t^\mathbf{\ell}\right]
=  
2t\,\delta_{\bk,-\mathbf{\ell}}
\end{align}
whereas the Hermitian Covariation satisfies
\begin{align*}
\left[W_t^\bk,\overline{W_t^\mathbf{\ell}}\right]
=  
\left[W_t^\bk, W_t^{-\mathbf{\ell}}\right]
=  
2t\,\delta_{\bk,\mathbf{\ell}}.
\end{align*} 
We also observe that the construction of $(W_t^\bk)_{t\geq0}$ leads to the relation $W_t^{-\bk}=\overline{W_t^\bk }$ that is required for a strictly real-valued coefficient of a Fourier series. These  Brownian motions provide the desired stochastic amplitudes for the Fourier transport modes. What remains is to incorporate incompressibility in the driving force. This would consist of  noise coefficients that maps non-zero integer frequency vectors $\bk\in\mathbb{Z}^2_{\bm{0}}$ to specific vectors in $\mathbb{R}^2$.  
More precisely,  we consider the orthonormal basis
$(e^{i\bk\cdot\by})_{\bk\in\mathbb{Z}^2}$ of the space $L^2(\Gamma)$ formed by trigonometric
functions and for any $N\in\mathbb{N}$, set
\begin{align*}
H^N = \mathrm{span}\left\{e
^{i\bk\cdot\by} \text{ with } |\bk| \leq N\right\}
\end{align*}
with the associated $L^2$-orthogonal projection $P^N:L^2(\Gamma)\rightarrow H^N$. Now, for $\bk=(k_1,k_2)^\top\in\mathbb{Z}^2_{\bm{0}}$ with $\bk^\perp=(-k_2,k_1)^\top$, we consider the ansatz 
\begin{equation}\label{ansatz}
\left\{\begin{aligned}
\bm{\sigma}^N_\bk(\by) & =  \frac{\sqrt{2\gamma}}{\epsilon_0\varrho_s}\frac{i\,\bk^\perp}{|\bk|^2}\bm{1}_{\{N\leq|\bk|\leq 2N\}}
e^{i\,\bk\cdot\by}\Big(\sum_{N\leq|\bk|\leq 2N}\frac{1}{|\bk|^2}\Big)^{-1/2}, \qquad \gamma>0
\\[0.4em]
\bm{\sigma}^N_{\bm{0}} (\by) & = {\bm{0}} ,
\end{aligned} \right.
\end{equation}
which is similar to the coefficient considered in \cite{flandoli2020convergence} but multiplied by the imaginary unit $i$ and also rescaled to suit our setting. Note that the choice of $\bm{\sigma}^N_\bk$  is by no means  unique. Indeed, several alternative constructions of transport noise coefficients appear in the literature; see, for instance,  \cite{flandoli2021scaling, flandoli2020convergence, flandoli2021high,  galeati2020convergence}. Nevertheless, $\bm{\sigma}^N_\bk$ as defined above is suitable for our purpose as it is incompressible (in Fourier or frequency space) since $\bk^\perp\cdot\bk=0$ and it is real-valued since $\bm{\sigma}^N_{-\bk}=\overline{\bm{\sigma}^N_\bk}$. Additionally, one can verify that it is Lipschitz continuous. Furthermore, combining the relation $\bm{\sigma}^N_{-\bk}=\overline{\bm{\sigma}^N_\bk}$ with the fact that $W_t^{-\bk}=\overline{W_t^\bk}$, it follows that  for any fixed $t\in I$,
\begin{align*}  
\sum_{\bk\in\mathbb{Z}^2_{\bm{0}}}\bm{\sigma}^N_\bk( \by)   W_t^\bk 
          =
          \frac{\sqrt{2\gamma}}{\epsilon_0\varrho_s}
          \Big(\sum_{N\leq|\bk|\leq 2N}\frac{1}{|\bk|^2}\Big)^{-1/2}
\sum_{N\leq|\bk|\leq 2N} \frac{i\,\bk^\perp}{|\bk|^2} e^{i\bk\cdot\by}   W_t^\bk 
\end{align*}
is a  (truncated) Fourier series of a well-defined, real-valued, divergence-free 
random field. 
By setting $\eta_0^N=P^N \eta_0$, $g^N=P^N g$ and $ \mathbf{g}^N=P^N  \mathbf{g}$,  our goal now is to search for coefficients $\varsigma_\bk^N:\Sigma\times I\rightarrow\mathbb{R}$ such that
\begin{align*}
\eta^N(t,\by)=\sum_{|\bk|\leq N} \int_0^t\varsigma_\bk^N(s)e^{i\bk\cdot\by}\ds+\eta_0^N
\end{align*}
solves the following system 
 \begin{equation}
\left\{ \begin{aligned}\label{sde1}
&\epsilon_0\varrho_s  \dd \dot{\eta}^N  
+ 
( \nu_e\eta^N +   \alpha \Delta_{\by}^2\eta^N- \beta \Delta_{\by}\eta^N  
 -
 \mathbf{g}^N\cdot \bn
-g^N  )   \dt
=
 \epsilon_0\varrho_s 
\sum_{\bk\in\mathbb{Z}^2_{\bm{0}}} \bm{\sigma}^N_\bk  \cdot\naby\dot{\eta}^N  \circ  \dd W_t^\bk 
,
 \\
&\eta_\star^N=\partial_t\eta^N(0), \qquad
 \eta_0^N=P^N \eta_0.
\end{aligned} \right.
 \end{equation}
with  the initial condition $\varsigma_\bk^N(0)$ chosen such that\footnote{For example, $\varsigma_\bk^N(0)\equiv\widehat{\eta}_{\star,\bk}$ where $\widehat{\eta}_{\star,\bk}=\int_\Gamma e^{-i\bk\cdot\by}\eta_\star(\by)\dy$}
\begin{align*}
 \eta_\star^N=\partial_t\eta^N(0) \rightarrow \eta_\star\qquad\text{ in }\qquad L^2(\Gamma).
\end{align*} 
By Lemma \ref{lemA1}, the stochastic transport term can be rewritten in It\^o form as 
\begin{align*}
\epsilon_0\varrho_s\ 
\sum_{\bk\in\mathbb{Z}^2_{\bm{0}}} \bm{\sigma}^N_\bk  \cdot\naby\dot{\eta}^N  \circ  \dd W_t^\bk 
=&  \epsilon_0\varrho_s
\sum_{\bk\in\mathbb{Z}^2_{\bm{0}}} \bm{\sigma}^N_\bk \cdot\naby\dot{\eta}^N    \dd W_t^\bk 
    +
\gamma \Dely \dot{\eta}^N
 \dt  .
\end{align*}  
Thus,  the finite-dimensional  SDE  \eqref{sde1} is equivalent, via the Stratonovich-to-It\^o map,  to
 \begin{equation}
\left\{ \begin{aligned}\label{sde2}
&\epsilon_0\varrho_s  \dd \dot{\eta}^N  
+ 
( \nu_e\eta^N +   \alpha \Delta_{\by}^2\eta^N- \beta \Delta_{\by}\eta^N  
 -
 \mathbf{g}^N\cdot \bn
-g^N  -\gamma\Dely\dot{\eta}^N)   \dt
\\&\qquad\qquad= 
\epsilon_0\varrho_s
\sum_{\bk\in\mathbb{Z}^2_{\bm{0}}}     \bm{\sigma}^N_\bk\cdot\naby \dot{\eta}^N 
 \dd W_t^\bk,
 \\
&\eta_\star^N=\partial_t\eta^N(0), \qquad
 \eta_0^N=P^N \eta_0.
\end{aligned} \right.
\end{equation} 
The precise notion of a solution of \eqref{sde2} is now given as follows:
 \begin{definition}[Weak pathwise solution]
\label{def:weakSolution}
Let $(\eta_0, \eta_\star,  g, \mathbf{g},(\bm{\sigma}^N_\bk)_{\bk\in\mathbb{Z}_{\bm{0}}})$ be a dataset such that
\begin{equation}
\begin{aligned}
\label{datasetStrongSol}
&
\eta_0 \in W^{2,2}(\Gamma ) \text{ with } \Vert \eta_0 \Vert_{L^\infty( \Gamma )} < L, \quad
\eta_\star \in L^{2}(\Gamma ),  
\\&
g, \mathbf{g} \in L^{4}(I;L^{2}(\Gamma )), \quad
\Vert\bm{\sigma}^N_\bk\Vert_{W^{1,\infty}(\Gamma)}\lesssim 1.
\end{aligned}
\end{equation} 
Also, let $(\Sigma,\mathcal{F},(\mathcal{F}_t)_{t\geq0},\mathbb{P})$  be a stochastic basis and let $(W^\bk_t)_{t\geq0}$ be a Brownian motion adapted to the complete right-continuous filtration $(\mathcal{F}_t)_{t\geq0}$.
We call $\eta^N$
a \textit{weak pathwise solution}  of \eqref{sde2} with Fourier truncated data $(\eta^N_0, \eta^N_\star,  g^N, \mathbf{g}^N,(\bm{\sigma}^N_\bk)_{\bk\in\mathbb{Z}_{\bm{0}}})$ provided that the following holds:
\begin{itemize}
\item[(a)]  $ \eta^N $ is $(\mathcal{F}_t)$-adapted with
\begin{align*}
&\eta^N \in L^{\infty} (I;W^{2,2}(\Gamma ))
,
\qquad
\dot{\eta}^N \in  C_w (\overline{I};L^{2}(\Gamma ))  \qquad \text{a.s.};
\end{align*} 
\item[(b)] the equation
\begin{align*}  
\epsilon_0\varrho_s\int_0^t\int_{\Gamma}\dot{\eta}^N\dot{\zeta}\dy\ds
=
&\int_0^t\int_{\Gamma}( \nu_e\eta^N\zeta   
+\alpha\Dely \eta^N\Dely \zeta + \beta \naby\eta^N\cdot\naby\zeta )\dy\ds
\\
&-\int_0^t\int_{\Gamma}( g^N\zeta+\mathbf{g}^N\cdot\bn\zeta  +\gamma\dot{\eta}^N\Dely\zeta)\dy\ds
 \\
&+
 \epsilon_0\varrho_s
\int_0^t\int_{\Gamma} \sum_{\bk\in\mathbb{Z}^2_{\bm{0}}} \dot{\eta}^N\bm{\sigma}^N_\bk\cdot\naby \zeta 
\dy\dd W_s^\bk
\end{align*} 
holds $\mathbb{P}$-a.s. for a.e. $t\in \overline{I}$ and for all $\zeta\in C^\infty_c(I \times\Gamma)$.
\end{itemize}
\end{definition}
Note that this solution is  weak in the PDE sense (equation holds weakly in the sense of distributions) but strong in the stochastic sense (the solution is defined on a given stochastic basis with a given family of Brownian motions). Compared to the weak-weak solution usually explored in the literally, the linear structure of the shell regularises the solution in the stochastic sense.
Now, with this precise notion of a solution given and the prior  preparatory framework, we can now state the main result of this section.
 \begin{theorem}\label{thm:main1}
Fix $\epsilon_0,\varrho_s,\alpha,\gamma>0$ and $\nu_e,\beta\geq0$   and let $ (\Sigma,\mathcal{F},(\mathcal{F}_t)_{t\geq0},\mathbb{P})$ be a stochastic basis. Assume that $(\eta_0, \eta_\star,  g, \mathbf{g} )$ are such that
\begin{align*}
&
\eta_0 \in W^{2,2}(\Gamma ) \text{ with } \Vert \eta_0  \Vert_{L^\infty( \Gamma )} < L, \quad
\eta_\star  \in L^{2}(\Gamma ),  
\\&
g, \mathbf{g} \in L^{4}(I;L^{2}(\Gamma )).
\end{align*}
$\mathbb{P}$-a.s. Then we can find incompressible vector fields $(\bm{\sigma}^N_\bk)_{\bk\in\mathbb{Z}^2_{\bm{0}}}$ satisfying $\Vert\bm{\sigma}_\bk^N\Vert_{W^{1,\infty}(\Gamma)}\lesssim 1$ and a corresponding family $(W_t^\bk)_{t\geq0}$ of Brownian motions adapted to $(\mathcal{F}_t)_{t\geq0}$ such that a unique weak pathwise solution $\eta^N$ of \eqref{sde2}
with  Fourier truncated  data $\left(\eta_0^N, \eta_\star^N,   g^N, \mathbf{g}^N,(\bm{\sigma}_i^N)_{i=1}^N \right)$  exists\footnote{Note that since this latter dataset are truncated Fourier series of the original dataset and their individual regularities are at worst square-integrable, the latter automatically converges pointwise almost everywhere to their corresponding originals.}.  Furthermore, 
up to subsequence (not relabelled)
\begin{align*}
\eta^N \overset{*}{\rightharpoonup} \eta \text{ in }& L^\infty(I;W^{2,2}(\Gamma)),
\\
\dot{\eta}^N \overset{*}{\rightharpoonup}  \dot{\eta}
\text{ in }& L^\infty(I;L^{2}(\Gamma)),
\end{align*}
$\mathbb{P}$-a.s. where $\eta$ is the unique global weak solution  of
\begin{align*}
\epsilon_0\varrho_s  \, \ddot{\eta}  
&+ 
 \nu_e\eta +   \alpha \Delta_{\by}^2\eta- \beta \Delta_{\by}\eta
-
 \mathbf{g}\cdot \bn -g -
 \gamma
 \Dely\dot{\eta}  
 =0
\end{align*}
with data $(\eta_0, \eta_\star, g, \mathbf{g} )$. Here, $\ddot{\eta} :=\partial_t^2\eta$, $\dot{\eta} :=\partial_t\eta$ and by `weak solution' for the limit system, we mean that
\begin{align*}  
\epsilon_0\varrho_s\int_0^t\int_{\Gamma}\dot{\eta}\dot{\zeta}\dy\ds
=
&\int_0^t\int_{\Gamma}( \nu_e\eta\zeta   
+\alpha\Dely \eta\Dely \zeta + \beta \naby\eta\cdot\naby\zeta 
-g\zeta-\mathbf{g}\cdot\bn\zeta  -\gamma\dot{\eta}\Dely\zeta)\dy\ds 
\end{align*} 
holds for a.e. $t\in \overline{I}$ and for all $\zeta\in C^\infty_c(I \times\Gamma)$.
\end{theorem}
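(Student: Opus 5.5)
We follow the three stages already set up in this section: solve the finite-dimensional system for fixed $N$, derive $N$-uniform energy bounds, and pass to the limit pathwise. We do not use stochastic compactness.

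\emph{Existence and uniqueness for fixed $N$.} Take $\bm{\sigma}^N_\bk$ as in \eqref{ansatz} and $W^\bk$ as in \eqref{BMimaginary}. Then Lemma \ref{lemA1} turns \eqref{sde1} into the It\^o system \eqref{sde2}. Write $\eta^N=\sum_{|\bk|\le N}\widehat\eta_\bk(t)e^{i\bk\cdot\by}$ and test with $e^{-i\bk\cdot\by}$. This gives a linear system of It\^o SDEs for the finitely many Fourier coefficients $(\widehat\eta_\bk,\widehat{\dot\eta}_\bk)$. The drift has constant coefficients together with the forcing $P^Ng,P^N\mathbf g$ (which are $L^4$ in time). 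The diffusion is linear with bounded coefficients. Two points need care here. First, products $\bm\sigma^N_\bk\cdot\naby\dot\eta^N$ produce modes outside $H^N$, so we read the equation as projected by $P^N$, i.e.\ as the Galerkin system. Second, we must check that the Itô correction computed in Lemma \ref{lemA1} is still $\gamma\Dely\dot\eta^N$ up to this projection. Standard results for linear SDEs with Lipschitz coefficients then give a unique strong solution with continuous paths in $H^N$, adapted to $(\mathcal F_t)$. This yields (a) and (b) of Definition \ref{def:weakSolution}, and $\Vert\bm\sigma^N_\bk\Vert_{W^{1,\infty}}\lesssim1$ follows directly from \eqref{ansatz}.

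\emph{Uniform energy estimate.} Apply It\^o's formula to
\[
E^N(t)=\tfrac{\epsilon_0\varrho_s}{2}\Vert\dot\eta^N\Vert_{L^2}^2+\tfrac{\nu_e}{2}\Vert\eta^N\Vert_{L^2}^2+\tfrac\alpha2\Vert\Dely\eta^N\Vert_{L^2}^2+\tfrac\beta2\Vert\naby\eta^N\Vert_{L^2}^2.
\]
The key structural fact is that the Stratonovich transport term is energy-neutral. Since $\mathrm{div}\,\bm\sigma^N_\bk=0$, we have $\int\dot\eta^N\bm\sigma^N_\bk\cdot\naby\dot\eta^N=0$. Equivalently, in the It\^o form the Itô correction from the quadratic variation \eqref{qv} exactly cancels the dissipation $\gamma\Vert\naby\dot\eta^N\Vert^2$ created by $-\gamma\Dely\dot\eta^N$ (modulo the projection error). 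So the It\^o form gives $\dd E^N\le (\text{forcing})\dt+\dd M^N$, where $M^N$ is a martingale. Young and Gronwall control the forcing through $\Vert g\Vert_{L^4L^2}$.

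This is the step I expect to be the main obstacle. The estimate must be \emph{$\mathbb P$-a.s.} rather than only in expectation, because the convergence is stated pathwise. I would try to obtain a pathwise bound by exploiting that in Stratonovich form the transport conserves $\Vert\dot\eta^N\Vert_{L^2}$ path by path. Failing that, I would bound $\mathbb E\sup_t E^N$ via Burkholder--Davis--Gundy and extract an a.s.\ bound along a subsequence by Borel--Cantelli. Either route gives $\eta^N$ bounded in $L^\infty(I;W^{2,2})$ and $\dot\eta^N$ bounded in $L^\infty(I;L^2)$. Then Banach--Alaoglu, applied pathwise, gives the stated weak-$*$ limits; on the set of full measure the subsequence may depend on $\omega$.

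\emph{Identification of the limit.} Pass to the limit in (b). The linear terms converge by weak-$*$ convergence, and the data converge strongly since $P^N\to I$ in $L^2$. It remains to show that the stochastic integral vanishes, which follows from the Itô isometry and \eqref{qv}:
\[
\mathbb E\Big|\int_0^t\!\!\int_\Gamma\sum_\bk\dot\eta^N\bm\sigma^N_\bk\cdot\naby\zeta\dy\dd W^\bk_s\Big|^2\lesssim\sum_{N\le|\bk|\le2N}\mathbb E\int_0^t\big|\langle\dot\eta^N,\bm\sigma^N_\bk\cdot\naby\zeta\rangle\big|^2\ds .
\]
For smooth $\zeta$, the function $\bm\sigma^N_\bk\cdot\naby\zeta$ has Fourier support near $\bk$. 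Its coefficients carry the factor $|\bk|^{-1}\big(\sum_{N\le|\bk|\le 2N}|\bk|^{-2}\big)^{-1/2}\sim|\bk|^{-1}$, and the normalising sum is of order one in two dimensions. By Bessel's inequality the sum is therefore $\lesssim N^{-2}\sup_t\Vert\dot\eta^N\Vert_{L^2}^2\Vert\zeta\Vert_{W^{1,1}}^2\to0$, which is the usual Flandoli--Galeati--Luo mechanism. Extracting a further a.s.\ subsequence then makes the martingale term vanish. Finally, uniqueness of the linear damped limit equation, obtained by the energy argument with $\gamma>0$, shows the limit is the unique weak solution, and initial data pass through $P^N\eta_0\to\eta_0$ and $\eta^N_\star\to\eta_\star$.
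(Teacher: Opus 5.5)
Your three-stage plan is the paper's own route: finite-dimensional SDE theory, an It\^o energy identity, pathwise weak-$*$ compactness, and an $O(N^{-2})$ second-moment bound on the stochastic integral plus Borel--Cantelli. However, you leave open the step you call the main obstacle, and your fallback for it would not work. The missing observation is that the martingale $M^N$ in It\^o's formula for $E^N$ vanishes identically. Its integrand is $\epsilon_0\varrho_s\int_\Gamma\dot\eta^N\,\bm{\sigma}^N_\bk\cdot\naby\dot\eta^N\dy$, which is zero by the very divergence-free identity you wrote down. It stays zero after a Galerkin projection, because $P^N$ is self-adjoint and $\dot\eta^N\in H^N$. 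So once the quadratic-variation term is dominated by the $\gamma$-dissipation (see below), the energy bound holds for every $\omega$ and uniformly in $N$; this is exactly how the paper obtains \eqref{energyGal}. Your fallback cannot give this. A bound on $\sup_N\mathbb{E}\sup_tE^N$ only yields $\mathbb{P}(\sup_tE^N>\lambda)\le C/\lambda$, which is not summable in $N$ for fixed $\lambda$. Borel--Cantelli therefore gives no bound uniform in $N$ on a set of full measure.

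You are right that $\bm{\sigma}^N_\bk\cdot\naby\dot\eta^N$ leaves $H^N$, which the paper does not address. But the check you list is false, so do not derive the bound from a projected Stratonovich system. Projecting \eqref{sde1} produces an It\^o correction proportional to $\sum_\bk P^N\big(\bm{\sigma}^N_\bk\cdot\naby P^N(\bm{\sigma}^N_{-\bk}\cdot\naby\,\cdot\,)\big)$. On $e^{i\mathbf{j}\cdot\by}$, only wavevectors with $|\bk|\ge N$ and $|\bk-\mathbf{j}|\le N$ contribute, hence only $N\le|\bk|\le N+|\mathbf{j}|$. Its Fourier multiplier is therefore $O(|\mathbf{j}|^3/N)$ rather than a fixed negative multiple of $|\mathbf{j}|^2$, and the viscous term would disappear in the limit.

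Instead, project \eqref{sde2} itself, keeping $-\gamma\Dely\dot\eta^N$ as an explicit drift. The quadratic-variation term is then $\epsilon_0\varrho_s\sum_\bk\Vert P^N(\bm{\sigma}^N_\bk\cdot\naby\dot\eta^N)\Vert_{L^2(\Gamma)}^2$, which is at most its unprojected value. Item (b) then holds with $P^N\zeta$ in place of $\zeta$, which suffices because $P^N\zeta\to\zeta$ in $C^1(\overline{I}\times\Gamma)$.

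Recheck the constant you import from Lemma \ref{lemA1}. The mass divides out of the martingale part of $\dot\eta^N$, so with \eqref{ansatz} the unprojected value is $\tfrac{\gamma}{\epsilon_0\varrho_s}\Vert\naby\dot\eta^N\Vert_{L^2(\Gamma)}^2$, not $\gamma\Vert\naby\dot\eta^N\Vert_{L^2(\Gamma)}^2$. The domination you need holds if you take the amplitude $\sqrt{2\gamma/(\epsilon_0\varrho_s)}$ instead of $\sqrt{2\gamma}/(\epsilon_0\varrho_s)$, which the existential form of the theorem permits.

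Finally, your Bessel step needs $\Vert\naby\zeta\Vert_{L^\infty}$, not $\Vert\zeta\Vert_{W^{1,1}}$, since you bound the $\ell^2$ norm of the Fourier coefficients of $\dot\eta^N\naby\zeta$ by $\Vert\dot\eta^N\naby\zeta\Vert_{L^2}$.
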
 
\begin{remark}
Whereas Theorem \ref{thm:main1} is performed for deterministic dataset $(\eta_0, \eta_\star,  g, \mathbf{g} )$ satisfying \eqref{datasetStrongSol}, a careful analysis of the subsequent proof shows that it can be extended to random variables $(\eta_0, \eta_\star,  g, \mathbf{g} )$ provided that they satisfy
\begin{align*}
&
\eta_0 \in L^4(\Sigma;W^{2,2}(\Gamma )) \text{ with } \Vert \eta_0  \Vert_{L^\infty( \Gamma )} < L \text{ a.s}, \quad
\eta_\star  \in L^4(\Sigma;L^{2}(\Gamma )),  
\\&
g, \mathbf{g} \in L^{4}(\Sigma\times I;L^{2}(\Gamma ))
\end{align*}
and their corresponding truncated dataset converges almost surely to them.
\end{remark}
\begin{proof}[Proof of Theorem \ref{thm:main1}] 
Since  \eqref{sde2} is a linear finite-dimensional SDE with global Lipschitz coefficients, standard finite-dimensional SDE theory (see, e.g.,   \cite[Chapter 5, Theorem 2.9]{karatzas2014brownian} and Yamada--Watanabe Theorem) guarantees the existence of a unique strong stochastic solution on the prescribed stochastic basis. Consequently to establish the first part of Theorem \ref{thm:main1}, it only remains to verify that this solution satisfies the regularity requirements in  item (a) of Definition \ref{def:weakSolution}.


%
%
For this purpose, we apply It\^o's formula to the mapping $t\mapsto \tfrac{1}{2}\Vert\dot{\eta}^N\Vert_{L^2(\Gamma)}^2$. This yields
\begin{align*}
\frac{1}{2}&\Big( \epsilon_0\varrho_s\Vert\dot{\eta}^N(t)\Vert_{L^2(\Gamma)}^2+\nu_e \Vert \eta^N(t)\Vert_{L^2(\Gamma)}^2+\alpha \Vert \Dely\eta^N(t)\Vert_{L^2(\Gamma)}^2+\beta \Vert \naby\eta^N(t)\Vert_{L^2(\Gamma)}^2\Big)+\gamma \int_0^t\Vert \naby\dot{\eta}^N \Vert_{L^2(\Gamma)}^2\ds
\\&=
\frac{1}{2}\Big( \epsilon_0\varrho_s\Vert \eta_\star^N\Vert_{L^2(\Gamma)}^2+\nu_e \Vert \eta_0^N\Vert_{L^2(\Gamma)}^2+\alpha \Vert \Dely\eta_0^N\Vert_{L^2(\Gamma)}^2+\beta \Vert \naby\eta_0^N\Vert_{L^2(\Gamma)}^2\Big)
+
\int_0^t\int_\Gamma(g^N+\mathbf{g}^N\cdot\bn)\dot{\eta}^N\dy\ds
 \\&+
  \epsilon_0^2\varrho_s^2  \int_0^t\sum_{\bk\in\mathbb{Z}^2_{\bm{0}}}  \Vert\bm{\sigma}^N_\bk\cdot \naby\dot{\eta}^N \Vert_{L^2(\Gamma)}^2\ds 
+
 \epsilon_0\varrho_s \int_0^t\sum_{\bk\in\mathbb{Z}^2_{\bm{0}}}  \int_\Gamma    (\bm{\sigma}^N_\bk\cdot\naby \dot{\eta}^N )\dot{\eta}^N \dy
 \dd W_s^\bk
\end{align*}
$\mathbb{P}$-a.s. for all $t\in I$, where the quadratic variation term obtained follows from \eqref{qv}.
Note that due to the divergence-free property of  $\bm{\sigma}^N_\bk$, 
\begin{align*}
\int_\Gamma    (\bm{\sigma}^N_\bk\cdot\naby \dot{\eta}^N )\dot{\eta}^N \dy=-\frac{1}{2}
\int_\Gamma    \mathrm{div}_\by(\bm{\sigma}^N_\bk)| \dot{\eta}^N|^2 \dy=0 .
\end{align*}
Hence, the noise term vanishes. Moreover, since $\eta_\star^N=P^N\eta_\star$ and $\eta_0^N=P^N \eta_0$, we have that
\begin{align*}
\frac{1}{2}&\Big( \epsilon_0\varrho_s\Vert \eta_\star^N\Vert_{L^2(\Gamma)}^2+\nu_e \Vert \eta_0^N\Vert_{L^2(\Gamma)}^2+\alpha \Vert \Dely\eta_0^N\Vert_{L^2(\Gamma)}^2+\beta \Vert \naby\eta_0^N\Vert_{L^2(\Gamma)}^2\Big)
\\&\leq
\frac{1}{2}\Big( \epsilon_0\varrho_s\Vert \eta_\star\Vert_{L^2(\Gamma)}^2+\nu_e \Vert \eta_0\Vert_{L^2(\Gamma)}^2+\alpha \Vert \Dely\eta_0\Vert_{L^2(\Gamma)}^2+\beta \Vert \naby\eta_0\Vert_{L^2(\Gamma)}^2\Big).
\end{align*}
Now, since $\bn$ is a unit vector, we also have by Young's inequality and the fact that  $g^N=P^Ng$ and $\mathbf{g}^N=P^N \mathbf{g}$
\begin{align*}
\int_0^t\int_\Gamma(g^N+\mathbf{g}^N\cdot\bn)\dot{\eta}^N\dy\ds
&\leq
\frac{\epsilon_0\varrho_s}{4}\sup_{t\in I}\Vert\dot{\eta}^N(t)\Vert_{L^2(\Gamma)}^2
+
\frac{2}{\epsilon_0\varrho_s}\int_0^t\left(\Vert g\Vert_{L^2(\Gamma)}^2+\Vert \mathbf{g}\Vert_{L^2(\Gamma)}^2 \right)\ds
\end{align*}
Finally, by integrating by part, we obtain
\begin{align*}
 \epsilon_0^2\varrho_s^2  \int_0^t\sum_{\bk\in\mathbb{Z}^2_{\bm{0}}}   \Vert\bm{\sigma}^N_\bk\cdot \naby\dot{\eta}^N \Vert_{L^2(\Gamma)}^2\ds 
&=
 \epsilon_0^2\varrho_s^2  \int_0^t\sum_{\bk\in\mathbb{Z}^2_{\bm{0}}}\int_\Gamma\bm{\sigma}^N_\bk\cdot \naby\dot{\eta}^N ( \overline{\bm{\sigma}}^N_\bk\cdot \naby\dot{\eta}^N )\dy\ds 
\\&=-
 \epsilon_0^2\varrho_s^2  \int_0^t\sum_{\bk\in\mathbb{Z}^2_{\bm{0}}}\int_\Gamma\dot{\eta}^N \,\bm{\sigma}^N_\bk\cdot \naby( \overline{\bm{\sigma}}^N_\bk\cdot \naby\dot{\eta}^N) \dy\ds 
  \\
  &=-
\gamma   \int_0^t \int_\Gamma\dot{\eta}^N \,\Dely\dot{\eta}^N \dy\ds 
  \\
  &=
\gamma  \int_0^t \Vert \naby\dot{\eta}^N \Vert_{L^2(\Gamma)}^2\ds.  
\end{align*}
Combining the above estimates, we conclude that $\mathbb{P}$-a.s., the inequality
\begin{equation}
\begin{aligned}
\label{energyGal}
\sup_{t\in I}\Big(\epsilon_0\varrho_s\Vert&\dot{\eta}^N(t)\Vert_{L^2(\Gamma)}^2+\nu_e \Vert \eta^N(t)\Vert_{L^2(\Gamma)}^2+\alpha \Vert \Dely\eta^N(t)\Vert_{L^2(\Gamma)}^2+\beta \Vert \naby\eta^N(t)\Vert_{L^2(\Gamma)}^2 \Big) 
 \lesssim \mathcal{E}(\mathrm{data})
\end{aligned}
\end{equation}
holds uniformly in $N\in\mathbb{N}$, where
\begin{align*}
\mathcal{E}(\mathrm{data}):=&\epsilon_0\varrho_s\Vert \eta_\star\Vert_{L^2(\Gamma)}^2+\nu_e \Vert \eta_0\Vert_{L^2(\Gamma)}^2+\alpha \Vert \Dely\eta_0\Vert_{L^2(\Gamma)}^2+\beta \Vert \naby\eta_0\Vert_{L^2(\Gamma)}^2
\\&+\frac{1}{\epsilon_0\varrho_s}\int_I\left(\Vert g\Vert_{L^2(\Gamma)}^2+\Vert \mathbf{g}\Vert_{L^2(\Gamma)}^2\right)\dt . 
\end{align*} 
Since the right-hand side of \eqref{energyGal} is finite by assumption, it follows that
\begin{align*}
&\eta^N \in L^{\infty} (I;W^{2,2}(\Gamma ))
,
\qquad
\dot{\eta}^N \in  L^\infty (I;L^{2}(\Gamma ))  \qquad \text{a.s..}
\end{align*} 
To improve this so that $\dot{\eta}^N$ is weakly continuous in time, we wish to apply
\cite[Theorem 1.8.5]{breit2018stochastically}. Thus, we need to  show that
\begin{align} \label{Kolmo}
\dot{\eta}^N \in  C^\kappa (I;W^{-2,2}(\Gamma ))  \qquad \text{a.s.}
\end{align} 
for all $\kappa\in (0,\frac{1}{4})$. To this end, we test \eqref{sde2} with any nonzero test function  $\zeta\in W^{2,2}(\Gamma)$ to obtain for any $t_0,t_1\in I$ with $[t_0, t_1] \subset I$,
\begin{equation}\label{eq:WC1}
\begin{aligned}
\epsilon_0\varrho_s\int_\Gamma (\dot{\eta}^N(t_1)- \dot{\eta}^N(t_0))\zeta\dy  
=&
\int_{t_0}^{t_1}\int_\Gamma 
( \beta \Delta_{\by}\eta^N   - \nu_e\eta^N +
 \mathbf{g}^N\cdot \bn+g^N)   \zeta\dy\dt
\\&+
\int_{t_0}^{t_1}\int_\Gamma 
( \gamma \dot{\eta}^N
-
 \alpha \Delta_{\by} \eta^N)  \Dely\zeta \dy\dt
\\&- 
\epsilon_0\varrho_s
\int_{t_0}^{t_1}\int_\Gamma
\sum_{\bk\in\mathbb{Z}^2_{\bm{0}}}   \dot{\eta}^N \,  \bm{\sigma}^N_\bk\cdot\naby \zeta
 \dy\dd W_t^\bk. 
\end{aligned}
\end{equation}
With the energy estimate \eqref{energyGal} in hand, the first term of \eqref{eq:WC1} satisfies
\begin{align*}
\mathbb{E}\bigg\vert& \int_{t_0}^{t_1}\int_\Gamma 
( \beta \Delta_{\by}\eta^N   - \nu_e\eta^N +
 \mathbf{g}^N\cdot \bn+g^N)   \zeta\dy\dt\bigg\vert^4
\\&\lesssim
 |t_1-t_0|^4\,\mathbb{E}\sup_{t\in I} \Vert\eta^N\Vert_{W^{2,2}(\Gamma)}^4 
 +
  |t_1-t_0|^2\,\mathbb{E}\int_{t_0}^{t_1}\big(\Vert\mathbf{g}^N\Vert_{L^{2}(\Gamma)}^4+\Vert g^N\Vert_{L^{2}(\Gamma)}^4\big)\dt
 \\&\lesssim
 |t_1-t_0|^2\, \mathcal{E}(\mathrm{data})^2 
\end{align*}
with a constant depending only on $\beta, \nu_e, T $ and $\Vert  \zeta \Vert_{L^2(\Gamma)}^4$. In the last step above, we have used the standard property of estimating a truncated Fourier series by its full series.
 Similarly
\begin{align*}
\mathbb{E}\bigg\vert \int_{t_0}^{t_1}\int_\Gamma 
( \gamma \dot{\eta}^N
-
 \alpha \Delta_{\by} \eta^N)  \Dely\zeta \dy\dt\bigg\vert^4
 &\lesssim
 |t_1-t_0|^4\,\mathbb{E}\sup_{t\in I}\big(\Vert\dot{\eta}^N\Vert_{L^{2}(\Gamma)}^4
 +\Vert\eta^N\Vert_{W^{2,2}(\Gamma)}^4 \big)
  \\&\lesssim
 |t_1-t_0|^4\,  \mathcal{E}(\mathrm{data})^2
\end{align*}
with a constant depending only on $\gamma, \alpha $ and $\Vert \Dely \zeta \Vert_{L^2(\Gamma)}^4$. For the stochastic integral, since $|e^{i\,\bk\cdot\by}|=1$, it follows from Burkholder--Davis--Gundy inequality that
\begin{align*}
\mathbb{E}\bigg\vert  \int_{t_0}^{t_1}&\sum_{\bk\in\mathbb{Z}^2_{\bm{0}}} 
\int_{\Gamma}\epsilon_0\varrho_s \dot{\eta}^N \bm{\sigma}^N_\bk\cdot\naby \zeta 
 \dy\dd W_t^\bk\bigg\vert^4
\\&\lesssim
\epsilon_0^4\varrho_s^4
\mathbb{E}\bigg(\int_{t_0}^{t_1} \sum_{\bk\in\mathbb{Z}^2_{\bm{0}}} 
\Big(\int_{\Gamma} \dot{\eta}^N \bm{\sigma}^N_\bk\cdot\naby \zeta 
 \dy\Big)^2\dt
 \bigg)^2
  \\
 &\lesssim 
 \gamma^2 \Big(\sum_{N\leq|\bk|\leq 2N}\frac{1}{|\bk|^2}\Big)^{-2}
 \mathbb{E}\bigg(\int_{t_0}^{t_1}  \sum_{\bk\in\mathbb{Z}^2_{\bm{0}}}
\Big(\int_{\Gamma} \dot{\eta}^N \frac{i\bk^\perp}{|\bk|^2}\bm{1}_{\{N\leq|\bk|\leq 2N\}}
e^{i\,\bk\cdot\by}\cdot\naby \zeta 
 \dy\Big)^2\dt\bigg)^2
  \\
 &\lesssim 
\Big(\sum_{N\leq|\bk|\leq 2N}\frac{1}{|\bk|^2}\Big)^{-2}
\Big(\sum_{N\leq|\bk|\leq 2N}\frac{1}{|\bk|^2}\Big)^{2}
 \mathbb{E}\bigg(\int_{t_0}^{t_1}   \Vert \dot{\eta}^N   \Vert_{L^2(\mathbb{T}^2)}^2\Vert
  \naby \zeta 
 \Vert_{L^2(\mathbb{T}^2)}^2\dt\bigg)^2
 \\
 &\lesssim  
|t_1-t_0|^2\,\mathbb{E}\sup_{t\in I}\Vert \dot{\eta}^N   \Vert_{L^2(\mathbb{T}^2)}^4
 \\&\lesssim
 |t_1-t_0|^2\,  \mathcal{E}(\mathrm{data})^2
\end{align*}
with a constant depending only on $\gamma $ and $\Vert \naby \zeta \Vert_{L^2(\Gamma)}^4$. If we now collect the three estimates above and observe that $|t_1-t_0|^4\leq T^2|t_1-t_0|^2$, we conclude that
\begin{align*}
\mathbb{E} \Vert   \dot{\eta}^N(t_1)- \dot{\eta}^N(t_0)  \Vert_{W^{-2,2}(\Gamma)}^4\lesssim |t_1-t_0|^2
\end{align*}
holds uniformly in $N$. Consequently, by the Kolmogorov continuity theorem, there exists a modification of $\dot{\eta}^N$ (not relabelled) such that \eqref{Kolmo} holds. This completes the proof of item (a) of Definition \ref{def:weakSolution}.
 
Pathwise uniqueness for \eqref{sde2} (or equivalently \eqref{sde1}) is straightforward since the system is linear. Indeed, if $\eta_1^N$ and $\eta_2^N$ are two solutions with the same data, then their difference $\eta_{12}^N=\eta_1^N-\eta_2^N$ satisfy the exact same equation but with zero initial conditions. Thus, $\eta_{12}^N$ satisfies the energy estimate \eqref{energyGal} with zero right-hand side leading to 
 \begin{align*}
\mathbb{E}\Big(\epsilon_0\varrho_s\Vert&\dot{\eta}_{12}^N(t)\Vert_{L^2(\Gamma)}^2+\nu_e \Vert \eta_{12}^N(t)\Vert_{L^2(\Gamma)}^2+\alpha \Vert \Dely\eta_{12}^N(t)\Vert_{L^2(\Gamma)}^2+\beta \Vert \naby\eta_{12}^N(t)\Vert_{L^2(\Gamma)}^2 \Big)
=0
\end{align*}
for any $t\in I$. Since the norms in the expectation are nonnegative, pathwise uniqueness immediately follows, i.e.,
\begin{align*}
\mathbb{P}\big(\omega\in\Sigma\,:\, \eta^N_1=\eta^N_2\big)=1.
\end{align*}


We can now proceed to show the second part of Theorem \ref{thm:main1} involving the passage to limit in
\begin{equation}
\begin{aligned}  \label{distributionalForm}
\epsilon_0\varrho_s\int_0^t\int_{\Gamma}\dot{\eta}^N\,\dot{\zeta}\dy\ds
=
&\int_0^t\int_{\Gamma}(\nu_e\eta^N\zeta   
+\alpha\Dely \eta^N\Dely \zeta + \beta \naby\eta^N\cdot\naby\zeta )\dy\ds 
\\
&-\int_0^t\int_{\Gamma}(g^N\,\zeta+\mathbf{g}^N\cdot\bn\zeta  
+ \gamma\,  \dot{\eta}^N\Dely\zeta 
)\dy\ds
\\&+\epsilon_0\varrho_s  
\int_0^t\int_{\Gamma} \sum_{\bk\in\mathbb{Z}^2_{\bm{0}}} \dot{\eta}^N \bm{\sigma}^N_\bk\cdot\naby \zeta 
 \dy\dd W_s^\bk
\end{aligned} 
\end{equation} 
$\mathbb{P}$-a.s.  for all $\zeta\in C^\infty_c(I\times \Gamma)$. Here, we recall that the  dataset $\mathbf{g}^N$ and $g^N$ are square-integrable  truncated Fourier series of the original dataset $(\mathbf{g},g)$ and so they automatically converges pointwise almost everywhere to their corresponding originals. Furthermore, given the energy estimate \eqref{energyGal}, there exists a subsequence (not relabelled) such that
\begin{align*}
\eta^N \overset{*}{\rightharpoonup} \eta \text{ in }& L^\infty(I;W^{2,2}(\Gamma)),
\\
\dot{\eta}_\star^N \overset{*}{\rightharpoonup}  \dot{\eta}_\star \text{ in }& L^\infty(I;L^{2}(\Gamma)),
\end{align*}
$\mathbb{P}$-a.s.. This is sufficient to pass to the limit in all the drift  terms in \eqref{distributionalForm} and obtain
\begin{align*}
f(\eta^N)\zeta\rightarrow f(\eta)\zeta  \qquad  \mathbb{P}-a.s. ,
\end{align*}
where
\begin{align*}
f(\eta^N)\zeta:=
\int_0^t\int_{\Gamma}(\epsilon_0\varrho_s\dot{\eta}^N\,\dot{\zeta}-\nu_e\eta^N\zeta   
-\alpha\Dely \eta^N\Dely \zeta - \beta \naby\eta^N\cdot\naby\zeta 
+g^N\,\zeta+\mathbf{g}^N\cdot\bn\zeta  
+ \gamma\,  \dot{\eta}^N\Dely\zeta 
)\dy\ds
\end{align*}
with an analogous definition for $f(\eta)\zeta$. Thus, it remains to pass to the limit in the stochastic integral.
By the Burkholder--Davis--Gundy inequality,  we deduce that 
\begin{align*}
\mathbb{E} \sup_{t\in I}&\bigg\vert\int_0^t\sum_{\bk\in\mathbb{Z}^2_{\bm{0}}} 
\int_{\Gamma}\epsilon_0\varrho_s  \dot{\eta}^N \bm{\sigma}^N_\bk\cdot\naby \zeta 
 \dy\dd W_s^\bk\bigg\vert 
\\&\lesssim
\epsilon_0\varrho_s 
\mathbb{E}\bigg(\int_{I}  \sum_{\bk\in\mathbb{Z}^2_{\bm{0}}} 
\Big(\int_{\Gamma} \dot{\eta}^N \bm{\sigma}^N_\bk\cdot\naby \zeta 
 \dy\Big)^2\dt\bigg)^{1/2} 
  \\
 &\lesssim 
 \sqrt{\gamma} \Big(\sum_{N\leq|\bk|\leq 2N}\frac{1}{|\bk|^2}\Big)^{-1/2}
 \mathbb{E}\bigg(\int_{I}  \sum_{\bk\in\mathbb{Z}^2_{\bm{0}}}
\Big(\int_{\Gamma} \dot{\eta}^N \frac{i\bk^\perp}{|\bk|^2}\bm{1}_{\{N\leq|\bk|\leq 2N\}}
e^{i\,\bk\cdot\by}\cdot\naby \zeta 
 \dy\Big)^2\dt\bigg)^{1/2}  
  \\
  & \lesssim \Big(\sum_{N\leq|\bk|\leq 2N}\frac{1}{|\bk|^2}\Big)^{-1/2}
\bigg( \sup_{N\leq|\bk|\leq 2N}\frac{1}{|\bk|^2}\bigg)^{1/2}    \mathbb{E}\bigg( \int_{I} \sum_{N\leq|\bk|\leq 2N}  \Big(\int_{\Gamma} (\dot{\eta}^N 
\naby \zeta)\,  e^{i\,\bk\cdot\by}
 \dy\Big)^2\dt    \bigg)^{1/2} 
  \\
 &\lesssim 
\Big(\sum_{N\leq|\bk|\leq 2N}\frac{1}{|\bk|^2}\Big)^{-1/2}
\bigg( \sup_{N\leq|\bk|\leq 2N}\frac{1}{|\bk|^2}\bigg)^{1/2}  
 \mathbb{E} \Big(\int_{I}  \Vert \dot{\eta}^N  
  \naby \zeta 
 \Vert_{L^2(\mathbb{T}^2)}^2\dt\Big)^{1/2}  
 \\
 &\lesssim  
\frac{1}{ \sqrt{2\pi\ln(2)} N }
\Vert \naby \zeta \Vert_{L^\infty(I\times \Gamma)} 
 \mathbb{E}\Big( \int_I  \Vert \dot{\eta}^N \Vert_{L^2(\Gamma)}^2 \dt  \Big)^{1/2} 
  \\
 &\lesssim 
\frac{1}{N}
  \mathcal{E}(\mathrm{data})^{1/2}    
\end{align*} 
with a constant depending only on $\gamma, T $ and $\Vert \naby \zeta \Vert_{L^\infty(I\times\Gamma)}$. In the last step, we have used H\"older inequality in time and the energy estimate \eqref{energyGal} whose right-hand side is uniform in $N$. The last estimate above, therefore, converges to zero as $N\rightarrow\infty$. Thus, we can conclude that the stochastic integral converges in law to zero. Since constants (in this case zero) have no randomness, this convergence in law implies convergence in probability for the full sequence on the same probability space. 
In fact, this convergence in law to zero also directly implies almost sure convergence for the full sequence and not for a subsequence as one would expect.
This is because if we denote the stochastic integral by $M_t^N$, then for any $\varepsilon>0$, we have by  Chebyshev's inequality, 
\begin{align*}
\sum_{N=0}^\infty\mathbb{P}(\vert M_t^N-0\vert>\varepsilon)
\leq \sum_{N=0}^\infty\frac{1}{\varepsilon^2}\mathbb{E}(\vert M_t^N \vert^2)
\leq \sum_{N=0}^\infty\frac{1}{\varepsilon^2}\mathbb{E}(\sup_{t\in I}\vert M_t^N \vert^2)
\lesssim \frac{1}{\varepsilon^2}\sum_{N=0}^\infty\frac{1}{ N^2}<\infty.
\end{align*}
This implies that the event $E^N:=\{\omega\in\Sigma\,:\,\vert M_t^N-0\vert>\varepsilon\}$ happens only finitely often (a.s.) and as such, by the first Borel--Cantelli lemma,
\begin{align*}
\mathbb{P}\Big(\limsup_{N\rightarrow\infty}E^N\Big)=
\mathbb{P}\Big(\bigcap_{N=1}^\infty\bigcup_{k=N}^\infty E^k\Big)=0
\end{align*}
for all $\varepsilon>0$. This completes the proof.
\end{proof}

\appendix
\section{ }
\begin{lemma}\label{lemA1}
Let $\epsilon_0\varrho_s,\gamma>0$,
let $\bm{\sigma}^N_\bk$ be given by \eqref{ansatz} and $W_t^\bk $ given by \eqref{BMimaginary}. Then for any $\xi\in C^2(\Gamma)$, the identity
\begin{align*}
\epsilon_0\varrho_s
\sum_{\bk\in\mathbb{Z}^2_{\bm{0}}} \bm{\sigma}^N_\bk  \cdot\naby\xi  \circ  \dd W_t^\bk 
=&  \epsilon_0\varrho_s
\sum_{\bk\in\mathbb{Z}^2_{\bm{0}}} \bm{\sigma}^N_\bk \cdot\naby\xi    \dd W_t^\bk 
          +
      \gamma
\Dely \xi   \dt
\end{align*}
holds.
\end{lemma}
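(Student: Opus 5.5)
The plan is to use the standard Stratonovich-to-It\^o conversion: for a semimartingale integrand $X^\bk_t$,
\[
X^\bk_t\circ\dd W^\bk_t = X^\bk_t\,\dd W^\bk_t+\tfrac12\,\dd[X^\bk,W^\bk]_t .
\]
Here $X^\bk_t=\epsilon_0\varrho_s\,\bm{\sigma}^N_\bk\cdot\naby\xi_t$, and $\xi$ is the process being transported. In the application $\xi=\dot\eta^N$, whose martingale part is read off from \eqref{sde1} after dividing by $\epsilon_0\varrho_s$:
\[
\dd\xi=\sum_{\bm{\ell}}\bm{\sigma}^N_{\bm{\ell}}\cdot\naby\xi\circ\dd W^{\bm{\ell}}_t+(\text{finite variation}).
\]
Since $\bm{\sigma}^N_\bk$ is time independent, the martingale part of $X^\bk$ is
\[
\epsilon_0\varrho_s\sum_{\bm{\ell}}\bm{\sigma}^N_\bk\cdot\naby\big(\bm{\sigma}^N_{\bm{\ell}}\cdot\naby\xi\big)\,\dd W^{\bm{\ell}}_t .
\]
The lemma statement with $\xi\in C^2(\Gamma)$ is to be read in exactly this sense. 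The covariation is then computed using \eqref{qv}, namely $\dd[W^{\bm{\ell}},W^\bk]=2\delta_{\bm{\ell},-\bk}\dt$. This gives the Stratonovich correction
\[
\tfrac12\cdot 2\,\epsilon_0\varrho_s\sum_\bk \bm{\sigma}^N_\bk\cdot\naby\big(\bm{\sigma}^N_{-\bk}\cdot\naby\xi\big)\dt
=\epsilon_0\varrho_s\sum_\bk \bm{\sigma}^N_\bk\cdot\naby\big(\overline{\bm{\sigma}^N_\bk}\cdot\naby\xi\big)\dt .
\]

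The key step is to evaluate this second-order operator. Write $c_N=\big(\sum_{N\le|\bk|\le2N}|\bk|^{-2}\big)^{-1}$. Then
\[
\bm{\sigma}^N_\bk\otimes\overline{\bm{\sigma}^N_\bk}=\frac{2\gamma}{\epsilon_0^2\varrho_s^2}\,c_N\,\frac{\bk^\perp\otimes\bk^\perp}{|\bk|^4}\,\bm 1_{\{N\le|\bk|\le2N\}},
\]
because the phases $e^{\pm i\bk\cdot\by}$ cancel and $i\cdot\overline{i}=1$. The first-order term coming from differentiating $\overline{\bm{\sigma}^N_\bk}$ is
\[
\bm{\sigma}^N_\bk\cdot\naby\overline{\bm{\sigma}^N_\bk}\propto(\bk^\perp\cdot\bk)=0,
\]
so it drops out, and the operator reduces to
\[
\frac{2\gamma}{\epsilon_0\varrho_s}\,c_N\sum_{N\le|\bk|\le2N}\frac{\bk^\perp\otimes\bk^\perp}{|\bk|^4}:\naby^2\xi .
\]

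The remaining step, which I expect to be the main obstacle, is the isotropy identity
\[
\sum_{N\le|\bk|\le2N}\frac{\bk^\perp\otimes\bk^\perp}{|\bk|^4}=\frac12\sum_{N\le|\bk|\le2N}\frac{1}{|\bk|^2}\,I_2 .
\]
I will prove it by symmetry of the annulus under the lattice rotation $\bk\mapsto\bk^\perp$ and under the reflections $k_1\mapsto -k_1$. The off-diagonal entries $\sum k_1k_2/|\bk|^4$ vanish because the summand is odd under reflection. The two diagonal entries are equal because the rotation swaps $k_1^2$ and $k_2^2$. Their sum is $\sum|\bk|^{-2}$, so each equals half of it. Inserting this, the factor $c_N$ cancels, and the correction becomes
\[
\epsilon_0\varrho_s\cdot\frac{2\gamma}{\epsilon_0\varrho_s}\cdot\frac12\,\Dely\xi\dt=\gamma\,\Dely\xi\dt ,
\]
which is the claimed identity. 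One should also check that the constant normalisation is consistent with the factor $2$ in \eqref{qv} used above, since an error there would change $\gamma$ by a factor of $2$.
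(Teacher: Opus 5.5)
Your route is essentially the paper's: Stratonovich--It\^o conversion with the martingale part of $\xi$ read off from \eqref{sde1}, the covariation \eqref{qv}, cancellation of the phases, and the lattice identity $\sum_{N\le|\bk|\le2N}\bk^\perp\otimes\bk^\perp/|\bk|^4=\tfrac12\sum_{N\le|\bk|\le2N}|\bk|^{-2}\,I_2$. You prove that identity by reflection and rotation, where the paper groups points in fours and swaps coordinates. Two of your additions are genuinely useful:
\begin{itemize}
\item You make explicit that the first-order term vanishes because $\bk^\perp\cdot\bk=0$.
\item You observe that $\xi$ must be read as the process $\dot\eta^N$ transported by the same noise. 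For a fixed deterministic $\xi$ the correction would be zero, and the paper's proof silently works with $\dot\eta^N$ in exactly this way.
\end{itemize}

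The gap is in your last line, which is exactly where the constant is produced. Your operator $\frac{2\gamma}{\epsilon_0\varrho_s}c_N\sum\frac{\bk^\perp\otimes\bk^\perp}{|\bk|^4}:\naby^2\xi$ already contains the prefactor $\epsilon_0\varrho_s$ of the correction, since it equals $\epsilon_0\varrho_s\cdot\frac{2\gamma}{\epsilon_0^2\varrho_s^2}c_N\sum\cdots$. You then multiply by $\epsilon_0\varrho_s$ a second time. Carried through consistently, your own displays give
\[
\epsilon_0\varrho_s\sum_{\bk}\bm{\sigma}^N_\bk\cdot\naby\big(\overline{\bm{\sigma}^N_\bk}\cdot\naby\xi\big)\dt=\frac{2\gamma}{\epsilon_0\varrho_s}\,c_N\cdot\frac{1}{2c_N}\,\Dely\xi\dt=\frac{\gamma}{\epsilon_0\varrho_s}\,\Dely\xi\dt,
\]
not $\gamma\Dely\xi\dt$. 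The factor $2$ you were worried about is fine: the $\tfrac12\cdot 2$ from \eqref{qv} and the $2$ in $2\gamma$ balance against the $\tfrac12$ from isotropy.

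This is not a slip that recomputation repairs. With the amplitude $\sqrt{2\gamma}/(\epsilon_0\varrho_s)$ in \eqref{ansatz}, and martingale part $\sum_{\bm\ell}\bm\sigma^N_{\bm\ell}\cdot\naby\xi\,\dd W^{\bm\ell}_t$ for $\xi$ (which is what dividing \eqref{sde1} by $\epsilon_0\varrho_s$ gives), the stated identity holds only when $\epsilon_0\varrho_s=1$.

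The paper's proof reaches $\gamma$ by giving the correction the prefactor $(\epsilon_0\varrho_s)^2$. That would require the martingale part of $\xi$ to be $\epsilon_0\varrho_s\sum_{\bm\ell}\bm\sigma^N_{\bm\ell}\cdot\naby\xi\,\dd W^{\bm\ell}_t$, i.e.\ it skips the division you correctly performed. The extra factor you insert at the end reproduces that constant, but it does not follow from your own computation.

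A correct statement requires one of two changes:
\begin{itemize}
\item keep \eqref{ansatz} and conclude with $\frac{\gamma}{\epsilon_0\varrho_s}\Dely\xi\dt$; or
\item replace the amplitude in \eqref{ansatz} by $\sqrt{2\gamma/(\epsilon_0\varrho_s)}$, in which case your argument gives exactly $\gamma\Dely\xi\dt$.
\end{itemize}
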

\begin{proof}
Due to \eqref{qv} and the general Stratonovich-to-It\^o conversion rule
\begin{align*}
G_t\circ \dd H_t=G_t\dd H_t+\tfrac{1}{2}\dd[G,H]_t,
\end{align*}
it follows that the Stratonovich integral transforms into
\begin{align*}
\epsilon_0\varrho_s\ 
\sum_{\bk\in\mathbb{Z}^2_{\bm{0}}} \bm{\sigma}^N_\bk  \cdot\naby\dot{\eta}^N  \circ  \dd W_t^\bk 
=&  \epsilon_0\varrho_s
\sum_{\bk\in\mathbb{Z}^2_{\bm{0}}} \bm{\sigma}^N_\bk \cdot\naby\dot{\eta}^N    \dd W_t^\bk 
          +
         ( \epsilon_0\varrho_s\ )^2 
\sum_{\bk\in\mathbb{Z}^2_{\bm{0}}} \bm{\sigma}^N_\bk  \cdot\naby(\overline{\bm{\sigma}}^N_\bk \cdot\naby\dot{\eta}^N)    \dt
\\=&  \epsilon_0\varrho_s\theta^N
\sum_{N\leq|\bk|\leq 2N }  
          \tfrac{i\,\bk^\perp e^{ i\bk\cdot\by}}{|\bk|^2}\cdot\naby \dot{\eta}^N 
 \dd W_{t}^\bk
 \\&\qquad+
( \epsilon_0\varrho_s\theta^N)^2 \sum_{ N\leq|\bk|\leq 2N } 
           \tfrac{i\,\bk^\perp e^{i\bk\cdot\by}}{|\bk|^2}\cdot\naby\big( \tfrac{-i\,\bk^\perp e^{ -i\bk\cdot\by}}{|\bk|^2}\cdot\naby \dot{\eta}^N \big)
 \dt 
\end{align*}
where
\begin{align*}
\theta^N :=\frac{\sqrt{2\gamma}}{\epsilon_0\varrho_s}\Big(\sum_{N\leq|\bk|\leq 2N}\frac{1}{|\bk|^2}\Big)^{-1/2}.
\end{align*}
Now, observe that  
\begin{align*}
\tfrac{i\,\bk^\perp e^{ i\bk\cdot\by}}{|\bk|^2}&\cdot\naby\big( \tfrac{-i\,\bk^\perp e^{ -i\bk\cdot\by}}{|\bk|^2}\cdot\naby \dot{\eta}^N \big) 
\\&= 
\Big( \tfrac{-i\,k_2 e^{i\bk\cdot\by}}{|\bk|^2}\partial_1+\tfrac{i\,k_1 e^{i\bk\cdot\by}}{|\bk|^2}\partial_2\Big)\Big(\tfrac{i\,k_2 e^{-i\bk\cdot\by}}{|\bk|^2}\partial_1 \dot{\eta}^N- \tfrac{i\,k_1 e^{i\bk\cdot\by}}{|\bk|^2}\partial_2 \dot{\eta}^N\Big) 
  \\
&= 
  \tfrac{k_2^2  }{|\bk|^4} \partial_{11}^2 \dot{\eta}^N  
  -2
  \tfrac{k_1k_2  }{|\bk|^4} \partial_{12}^2 \dot{\eta}^N  
+
  \tfrac{k_1^2  }{|\bk|^4} \partial_{22}^2 \dot{\eta}^N  
\end{align*}
However, since points in $\mathbb{Z}^2_{\bm{0}}$ can be group into four symmetric points
\begin{align*}
(x,y), (-x,y), (x,-y), (-x,-y)
\end{align*}
whose grouping satisfies
\begin{align*}
xy+(-x)y+x(-y)+(-x)(-y)=0,
\end{align*}
it follows from symmetry that
\begin{align*}
\sum_{N\leq|\bk|\leq 2N}  \tfrac{k_1k_2  }{|\bk|^4} \partial_{12}^2 \dot{\eta}^N= \partial_{12}^2 \dot{\eta}^N\sum_{N\leq|\bk|\leq 2N} \tfrac{k_1k_2  }{|\bk|^4}  =0.
\end{align*}
On the other hand, since  any point $(x,y)\in\mathbb{Z}^2_{\bm{0}}$ can be paired with $(y,x)\in\mathbb{Z}^2_{\bm{0}}$
\begin{align*}
\sum_{N\leq|\bk|\leq 2N}  \tfrac{k_2^2  }{|\bk|^4}  =
\sum_{N\leq|\bk|\leq 2N}   \tfrac{k_1^2  }{|\bk|^4} 
=
\frac{1}{2}
\sum_{N\leq|\bk|\leq 2N}   \tfrac{k_1^2 +k_2^2 }{|\bk|^4}  
=
\frac{1}{2} 
\sum_{N\leq|\bk|\leq 2N}    \tfrac{1}{|\bk|^2} 
.
\end{align*} 
Therefore,
\begin{align*}
\sum_{N\leq|\bk|\leq 2N}   \tfrac{k_2^2  }{|\bk|^4}  \partial_{11}^2 \dot{\eta}^N  
+
\sum_{N\leq|\bk|\leq 2N}    \tfrac{k_1^2  }{|\bk|^4}  \partial_{22}^2 \dot{\eta}^N   
=
\frac{1}{2}
\Dely \dot{\eta}^N \sum_{N\leq|\bk|\leq 2N}    \tfrac{1}{|\bk|^2}  
\end{align*}
Putting all together finishes the proof.
\end{proof}

\section*{Statements and Declarations} 
\subsection*{Funding}
This work has been partly supported by Grant number 543675748  by the German Research
Foundation (DFG).
\subsection*{Author Contribution}
The authors wrote and reviewed the manuscript.
\subsection*{Conflict of Interest}
The authors declare that they have no conflict of interest.
\subsection*{Data Availability Statement}
Data sharing is not applicable to this article as no datasets were generated
or analyzed during the current study.
\subsection*{Competing Interests}
The authors have no competing interests to declare that are relevant to the content of this article.


\end{document}